\documentclass[a4paper,10pt]{article}

\usepackage{amsmath,amsthm,amsfonts,amssymb,euscript,dsfont,mathrsfs,enumerate}
\usepackage{xcolor}
\usepackage[english]{babel}
\usepackage[utf8]{inputenc}

\usepackage{subcaption}
\usepackage{algorithm}
\usepackage{algorithmic}
\usepackage{graphicx}

\newcommand{\attractor}[1]{}

\newtheoremstyle{nopunct}%
  {3pt}
  {3pt}
  {\itshape}
  {}
  {\bfseries}
  {}
  { }
  {}

\DeclareUnicodeCharacter{00A0}{~}
\usepackage{nameref}

\newtheoremstyle{questionstyle}
  {\topsep}{\topsep}      
  {\itshape}              
  {}                       
  {\bfseries}              
  {.}                      
  {.5em}                   
  {\centering #1~#2}       

\newtheorem{assumption}{Assumption}
\newtheorem{prop}{Proposition}

\newtheorem{theorem}{Theorem}
\newtheorem{coro}{Corollary}
\newtheorem{lemma}{Lemma}

\theoremstyle{questionstyle}

\theoremstyle{nopunct}
\newtheorem*{theorem*}{Theorem}
 
\newtheorem*{result*}{Result.}
\newtheorem*{question*}{Question.}

 \usepackage{authblk}
\usepackage{geometry}
\newcommand{\un}{\mathds{1}}
\newcommand{\ps}[1]{\langle #1\rangle}
\newcommand{\bP}{{\mathbb P}}
\newcommand{\bE}{{\mathbb E}}
\newcommand{\bR}{{\mathbb R}}

\newcommand{\xrs}{ x^{\rm RS} }

\newcommand{\cF}{{\mathcal F}}

\newcommand{\dr}{{d}}
\newcommand{\ex}{\mathrm{e}}

\newcommand\esp[1]{\mathbb{E}\left[#1\right]}

\newcommand\p[1]{\left( #1 \right)}
\newcommand{\espcond}[2]{\mathbb{E}\mathopen{}\left[#1\middle|#2\right]}

\newcommand\norm[1]{\left\lVert #1 \right\rVert}
\newcommand\abs[1]{\left\lvert #1 \right\rvert}

\usepackage{authblk}

\definecolor{darkgreen}{RGB}{0,128,60}  

\usepackage{pdfpages}
\usepackage{etoc}
\author[1]{{Radu-Alexandru}~{Dragomir}}

\author[2]{{François}~{Portier}}
\author[1]{{Victor}~{Priser}}

\affil[1]{LTCI, T\'el\'ecom Paris}
\affil[2]{CREST, ENSAI}

\title{Importance Sampling Optimization with Laplace Principle}

\usepackage{hyperref}
\begin{document}

\maketitle
\begin{abstract}
    Grid search and random search are widely used techniques for hyperparameter tuning in machine learning, especially when gradient information is unavailable. In these methods, a finite set of candidate configurations is evaluated, and the best-performing one is selected. We propose a simple and computationally inexpensive refinement of this paradigm: instead of selecting a single best point, we form a weighted average of the evaluated configurations, where the weights are chosen using an importance sampling scheme inspired by the Laplace principle. This scheme can be implemented as a post-processing step on top of a random search, with no additional function evaluations.
    We also propose an iterative variant, where the sampling distributions are chosen adaptively to generate new candidate points around the previous estimate, in the spirit of Evolution Strategy (ES) methods. 
    
In a general non-convex setting, we show that, after $n$ evaluations, the error of the proposed methods is of smaller order than $n^{-2/(d+2)} $. This compares favorably to random search or grid search rates of $n^{-1/d}$ as soon as $d> 2$. 
We illustrate the practical benefits of this averaging strategy on several examples.

\end{abstract}

\section{Introduction}
\label{sec:Introduction}


We consider the global optimization problem
$$\min_{x\in \mathbb R^d}  f(x),$$
where $f$ is a possibly nonconvex function with a unique minimizer $x^*$. We assume that the gradients are unknown or difficult to compute, motivating the use of gradient-free schemes. Applications of interest include hyperparameter optimization in machine learning \cite{koch2018autotune,feurer2019hyperparameter}, policy search in reinforcement learning \cite{stulp2012path,salimans2017evolution}, and ODE parameter estimation \cite{Villaverde2018}.

 A large number of global optimization methods involve exploring the space by sampling candidate points at random. The simplest one is plain random search, which draws $n$ i.i.d. samples $X^1,\dots, X^n$ from a distribution $q_0$ and outputs the best solution
\begin{equation}\label{eq:rs_intro}  \xrs_n = \arg \min \{ f(x) \,:\, x \in \{ X^1,\dots, X^n \} \}.
\end{equation}
More sophisticated algorithms, such as \textit{Evolution Strategies} \cite{Beyer2002}, perform random search in several stages. After each stage, a subset of the best performing solutions is retained and used to adapt the sampling policy for the next iteration. A notable example is CMA-ES \cite{hansen2006cma}, which is among the state-of-the-art algorithms for global optimization \cite{loshchilov2016cma}.

 \paragraph{Contributions.} In this work, we propose a scheme  to improve the performance of random search-based methods. Instead of taking the best point in \eqref{eq:rs_intro}, we suggest to compute a weighted average
\begin{equation}\label{eq:liso_intro}
    x_n =  \frac{\sum_{i=1}^n w^i X^i}{\sum_{i=1}^n w^i } \; \;\;\text{where } \;\; w^i = \frac{\exp(-\alpha f(X^i))}{ q_0(X^i) },
\end{equation}
and $\alpha > 0$ is a parameter. 
We show that, when $f$ is sufficiently smooth and locally convex around $x^*$, this approach achieves better guarantees than selecting the best sample $\xrs$ in~\eqref{eq:rs_intro} when $d > 2$. 
Building on this idea, we propose an iterative variant in which the sampling distribution is adapted at each stage to refine the search (for instance, using a Gaussian centered at the previous estimate). 

We demonstrate the performance of our approach on several global optimization benchmarks, by comparing it to other random search strategies.

\paragraph{Key idea: Laplace importance sampling.} 
Equation \eqref{eq:liso_intro} is motivated by a sampling scheme applied to the distribution 
\[
\pi_\alpha(x) := \frac{1}{Z_\alpha} \exp(-\alpha f(x)), \,{Z_\alpha} := \int \exp(-\alpha f(u)) du.
\]
For large $\alpha$, $\pi_\alpha$ is concentrated around the minimizer of~$f$, and its expectation provides a good approximation of $x^*$: this is known as the \textit{Laplace principle}. As sampling directly from $\pi_\alpha$ is untractable, we tackle the problem by using importance sampling (IS) \cite{robert1999monte,agapiou2017importance}. This classical sampling technique consists in drawing points from a simpler distribution $q_0$, and combining them with appropriate weights to estimate $\mathbb{E}_{X\sim\pi_\alpha}[X]$.

We choose the parameter $\alpha$ as to balance the variance of the estimator and the bias term $\|\mathbb{E}_{X\sim\pi_\alpha}[X] - x^*\|$. In our theory, the optimal scaling is $\alpha \propto n^{2/(d+2)}$.

Note that \eqref{eq:liso_intro} can also be interpreted as a smoothed version of the argmin operator (or \textit{softmin}), with an additional correction by $q_0(X^i)$ due to importance sampling.

\paragraph{Related work.} 
The idea of performing optimization by sampling from $\pi_\alpha \propto e^{-\alpha f}$ has been explored through several approaches, including simulated annealing based on Langevin dynamics \cite{Chiang197SA,xu2018global} or Metropolis-Hasting schemes \cite{kirkpatrick1983optimization}. The main novelty of our work, in contrast to these methods, is the use of importance sampling on $\pi_\alpha$.

The Laplace principle \cite{hwang1980laplace}, and its quantitative version from \cite{kirwin2010higherasymptoticslaplacesapproximation}, has also been used in the theoretical analysis of particle swarm/consensus-based optimization, to smooth the argmin operator \cite{pinnau2017consensus,Fornasier_2024,bianchi2025consensus}.

Our adaptive algorithm is inspired by adaptive importance sampling methods \cite{ho+b:1992,bugallo2017adaptive,portier2018asymptotic}. It is also closely related to the class of Evolution Strategy methods \cite{Beyer2002,hansen2006cma}. While the latter also rely on weighted averaging, the weights are determined by ranking-based coefficients \cite{ARNOLD200618}, rather than our Laplace importance weights.
Finally, our approach shares some similarities with \textit{cross-entropy methods} \cite{de2005tutorial}, who also rely on importance sampling for minimizing $f$. These methods differ in that they focus on fitting a probability model to estimate the distribution of the ``elite samples'' instead of using the measure $\pi_\alpha$.

    \section{Algorithms}
    \label{sec:algo}

The basic scheme, called \textit{Laplace Importance Sampling Optimization} (LISO), is described in Algorithm \ref{algo:IS} below. It samples $n$ points according to a distribution $q_0$ and outputs a weighted average as in \eqref{eq:liso_intro}.

\begin{algorithm}[ht]
\caption{Laplace Importance Sampling Optimization (LISO)}
\label{algo:IS}
\begin{algorithmic}
\setlength{\baselineskip}{1.6\baselineskip} 
\STATE{\textbf{Input:} $\alpha > 0$, sample number $n$, distribution $q_0$}

\STATE{ Sample $X^1,\dots, X^n \sim_{i.i.d.} q_0$}

\STATE{ Compute weights $w^i = \frac{\exp(-\alpha f(X^i))}{ q_0(X^i) }$}

\STATE{\textbf{Output:} $x_n= \frac{\sum_{i=1}^n w^i X^i }{ \sum_{i=1}^n w^i  }$}
\end{algorithmic}
\end{algorithm}

Algorithm \ref{algo:AIS} presents an adaptive variant of LISO in which the sampling distribution $q_i$ is adjusted at each iteration. Specifically, at iteration $i$, the method computes a Laplace weighted average $\mu_i$ based on the samples $X^1,\dots, X^i$. Then, the next point $X^{i+1}$ is drawn from the distribution $q_i$ chosen as the mixture
 \[
    q_i = (1-\lambda)\mathcal N(\mu_i,\sigma^2I_d) + \lambda q_0.
 \]
 Using the Gaussian centered at $\mu_i$ promotes local exploitation around the previous estimate, while the mixture with $q_0$ allows to guarantee sufficient exploration of the search space.
 Note that other choices of adaptive sampling policies beyond Gaussian updates are possible; see Section \ref{ss:aliso}.
 
\begin{algorithm}[H]
\caption{Adaptive LISO}
\label{algo:AIS}
\begin{algorithmic}
\setlength{\baselineskip}
{1.6\baselineskip} 
\STATE{\textbf{Input:} $n$, $q_0$, $(\alpha_n)$, $\lambda$,$\sigma$.}

\STATE{ Initialize $X^1 \sim q_0$}

\FOR{ $i=1,2 \dots, n-1$}

\STATE{ $\mu_i = \frac{ \sum_{k=1}^i w^{k}_i X^k }{ \sum_{k=1}^i w^{i,k} }$, where $w^{i,k} = \frac{\exp(-\alpha_i f(X^k))}{q_{k-1}(X^k)}$}
    
\STATE{ $q_i = (1-\lambda)\mathcal N(\mu_i,\sigma^2I_d) + \lambda q_0$}
    
 \STATE{ Sample $X^{i+1} \sim q_i$}
    
\ENDFOR

\STATE{\textbf{Output:} $x_n =\mu_n$}
\end{algorithmic}
\end{algorithm}

\paragraph{Complexity.}
    Both Algorithms \ref{algo:IS} and \ref{algo:AIS} require $n$ evaluations to $f$. Algorithm \ref{algo:IS} can be run in parallel, while Algorithm \ref{algo:AIS} must be executed sequentially, and performs a different weighted combination at each iteration. As a result, it requires $\mathcal{O}(dn^2 + n C_{f})$ operations, where $C_f$ is the cost of evaluating $f$, compared to $\mathcal{O}(dn + nC_f)$ for the standard variant. However, in many applications the evaluation of~$f$ dominates the computational cost, in which case this overhead of Algorithm \ref{algo:AIS} is negligible.


\section{Theoretical analysis}

\subsection{Informal statement of main result}\label{ss:informal}

Throughout this section, we make the following standing assumption on the objective function.

\begin{assumption}\label{hyp:f}
The function $f$ is four-times continuously differentiable with a unique global minimizer $x^\ast$. The Hessian $H_f(x^\ast)$ is positive-definite and we have $\int \exp( - f(x) ) \, \dr x<\infty$.
\end{assumption}

Let us first give an informal statement of the convergence rate of the LISO method (Corollary \ref{cor:LISO}).

\begin{theorem}[LISO, informal]\label{th:informal}
Let $f$ and $q_0$ satisfy Assumption~\ref{hyp:f} as well as certain integrability conditions at infinity. Then, for $\alpha,n$ large enough, the output $x_n$ of Algorithm~\ref{algo:IS} after $n$ iterations satisfies
    \begin{equation*}
        \mathbb{E}\bigl\|x_n - x^\ast\bigr\|^2
    \le  \frac{C}{n} \alpha^{\frac{d}{2}-1}  + \frac{C'}{\alpha^2},
    \end{equation*}
    where $C,C'$ are constants depending on $f$ and $q_0$.
\end{theorem}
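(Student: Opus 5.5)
We split the error through the Laplace mean $m_\alpha := \mathbb{E}_{X\sim\pi_\alpha}[X]$:
\[
\mathbb{E}\|x_n - x^\ast\|^2 \le 2\,\mathbb{E}\|x_n - m_\alpha\|^2 + 2\|m_\alpha - x^\ast\|^2 .
\]
The second term is the deterministic Laplace bias. The first term is the variance of a self-normalized importance sampling estimator. We bound them separately, obtaining $C'/\alpha^2$ for the bias and $C\alpha^{d/2-1}/n$ for the variance.

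\textbf{Bias.} We expand around $x^\ast$ with $x = x^\ast + u/\sqrt{\alpha}$. Using Assumption~\ref{hyp:f}, write $f(x)-f(x^\ast) = \tfrac12 u^\top H u/\alpha + T_3(u)/\alpha^{3/2} + O(|u|^4/\alpha^2)$, where $H=H_f(x^\ast)$ and $T_3$ is the cubic Taylor term. Expanding $\exp(-\alpha(f-f(x^\ast)))$ gives the Gaussian density times $1 - T_3(u)/\sqrt{\alpha} + O(\cdot/\alpha)$. The Gaussian part has zero mean, so the first surviving contribution to $m_\alpha - x^\ast$ is
\[
-\alpha^{-1/2}\,\mathbb{E}_{\mathcal N(0,H^{-1})}\bigl[u\,T_3(u)\bigr]\,\alpha^{-1/2},
\]
which is $O(1/\alpha)$. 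This is the quantitative Laplace principle (as in \cite{kirwin2010higherasymptoticslaplacesapproximation}). The region $\|x-x^\ast\|>\delta$ contributes an exponentially small amount. This uses uniqueness of the minimizer, the fact that $\inf_{\|x-x^\ast\|>\delta} f > f(x^\ast)$ (to be assumed among the integrability conditions), and $\int e^{-f}<\infty$, via $e^{-\alpha f}\le e^{-(\alpha-1)(f(x^\ast)+\eta)}e^{-f}$. Squaring yields $C'/\alpha^2$.

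\textbf{Variance.} Write $x_n - m_\alpha = \frac{\frac1n\sum_i w^i (X^i - m_\alpha)}{\frac1n\sum_i w^i}$. The numerator $N_n$ has mean zero, and the denominator $D_n$ has mean $Z_\alpha$. We would use the standard decomposition
\[
x_n - m_\alpha = \frac{N_n}{Z_\alpha} + \frac{N_n}{D_n}\cdot\frac{Z_\alpha - D_n}{Z_\alpha}.
\]
For the leading term,
\[
\mathbb{E}\|N_n\|^2/Z_\alpha^2 = \frac{1}{n Z_\alpha^2}\int \frac{e^{-2\alpha f(x)}\|x-m_\alpha\|^2}{q_0(x)}\,dx .
\]
By Laplace, $Z_\alpha \asymp e^{-\alpha f(x^\ast)}\alpha^{-d/2}$. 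The integral is $\asymp e^{-2\alpha f(x^\ast)} (2\alpha)^{-d/2}\,\alpha^{-1}/q_0(x^\ast)$, because $\|x-m_\alpha\|^2$ contributes a factor $1/\alpha$ under a Gaussian of covariance $(2\alpha H)^{-1}$. The ratio is therefore of order $\alpha^{d}\alpha^{-d/2-1}/n = \alpha^{d/2-1}/n$, as claimed. This needs $q_0(x^\ast)>0$ and integrability of $e^{-2\alpha f}/q_0$ at infinity; these are the ``integrability conditions'' in the statement.

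\textbf{Main obstacle.} The hard step is the remainder term, because $D_n$ can be small. The plan is to use $\|N_n/D_n\| = \|x_n - m_\alpha\|$, which is bounded by $\max_i\|X^i - m_\alpha\|$ since $x_n$ is a convex combination. We then combine Cauchy--Schwarz with a fourth-moment bound on $(D_n - Z_\alpha)/Z_\alpha$. Its variance is $\frac1n\bigl(\int e^{-2\alpha f}/q_0\bigr)/Z_\alpha^2 \asymp \alpha^{d/2}/n$, which is small once $n \gg \alpha^{d/2}$ ("$\alpha,n$ large enough"). An alternative is to split on the event $\{D_n \ge Z_\alpha/2\}$. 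On that event the remainder is bounded by $4\|N_n\|^2\cdot\bigl((D_n-Z_\alpha)/Z_\alpha\bigr)^2$, which is of lower order. On the complement we control the probability by Chebyshev or Bernstein and multiply by a moment bound on $\max_i\|X^i\|^2$, which requires a moment assumption on $q_0$. Either route yields a remainder $o(\alpha^{d/2-1}/n)$, or at worst one absorbed into the constant $C$, which completes the bound.
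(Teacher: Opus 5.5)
Your architecture matches the paper's proof of Corollary~\ref{cor:LISO}. The bias comes from the Laplace principle (Lemma~\ref{lemma:laplace_approx}). The fluctuation term is centered at $\pi_\alpha(\mathrm{id})$ so that the weighted second moment gains a factor $1/\alpha$. Your $m_\alpha$ is $\pi_\alpha(\mathrm{id})$; the paper uses $m_\alpha$ for the normalizing factor \eqref{eq:mlpha}. Your $N_n/Z_\alpha$ is exactly the paper's $\tilde\pi_{n,\alpha}(\bar\varphi)-\pi_\alpha(\bar\varphi)\tilde\pi_{n,\alpha}(1)$ with $\varphi=\mathrm{id}$. Your factor $2$ in the bias/variance split is actually more careful than the paper's display, which drops a cross term even though the self-normalized estimator is biased.

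The gap is in the remainder step. Your claim that \emph{either} route gives a remainder $o(\alpha^{d/2-1}/n)$, or one absorbable into $C$, is false for the first route. On the whole probability space, the bound $\|x_n-\pi_\alpha(\mathrm{id})\|\le\max_i\|X^i-\pi_\alpha(\mathrm{id})\|$ replaces a quantity of typical size $(\alpha^{d/2-1}/n)^{1/2}$ by one of order one. This throws away exactly what the centering gained. Concretely, set $\delta_n=(D_n-Z_\alpha)/Z_\alpha$. Independence of $X^1$ from $(X^i)_{i\ge 2}$ already gives $\mathbb{E}\bigl[\|X^1-\pi_\alpha(\mathrm{id})\|^2\delta_n^2\bigr]\gtrsim\mathbb{E}\delta_n^2\asymp\alpha^{d/2}/n$, which is a factor $\alpha$ above the main term. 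Cauchy--Schwarz with $\mathbb{E}\max_i\|X^i\|^4\le n\,\mathbb{E}\|X^1\|^4$ makes it worse, of order $\alpha^{d/2}/\sqrt n$. This cannot be absorbed into $C$: a bound $\alpha^{d/2}/n+\alpha^{-2}$ only yields the rate $n^{-4/(d+4)}$, not $n^{-4/(d+2)}$.

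Your second route is the paper's, but it works only if $\mathbb{P}(A^c)$, with $A=\{D_n\ge Z_\alpha/2\}$, decays \emph{exponentially}. On $A^c$ you still use the crude convex-combination bound, so the contribution is of order $\sqrt{n\,\mathbb{E}\|X^1\|^4}\,\sqrt{\mathbb{P}(A^c)}$. Chebyshev gives only $\mathbb{P}(A^c)\lesssim\alpha^{d/2}/n$, which makes this term of order $\alpha^{d/4}$. Even for bounded samples it would be of order $\alpha^{d/2}/n$, still too large.

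The paper gets $\mathbb{P}(A^c)\le\exp(-c\,n\alpha^{-d/2})$ from Bernstein's inequality (Lemma~\ref{lem:Bernstein}). This uses that the normalized weights $\pi_\alpha/q_0$ are bounded by $U\asymp\alpha^{d/2}$, which is where the tail-domination condition $e^{-(f-f(x^\ast))}\le c_0q_0$ of Assumption~\ref{hyp:q0} enters. That condition is stronger than the integrability of $e^{-2\alpha f}/q_0$ you list. Since the weights are nonnegative, a one-sided exponential lower-tail bound using only $\mathbb{E}(\bar w^1)^2$ would also do; what matters is exponential rather than polynomial decay. Requiring $\sqrt n\,\exp(-c\,n\alpha^{-d/2})\lesssim\alpha^{d/2-1}/n$ is exactly what produces the condition $n\ge c\,\alpha^{d/2}\log(\cdots)$ in Theorem~\ref{th:1}, which is slightly stronger than your $n\gg\alpha^{d/2}$.

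On $A$ itself you do not need your remainder expansion, which would require fourth moments of the weights. The paper simply uses $\|N_n/D_n\|^2\le 4\|N_n/Z_\alpha\|^2$ there, at the cost of a constant factor. It then bounds $\mathbb{E}\|N_n/Z_\alpha\|^2$ via Proposition~\ref{prop:unnormalized} applied to $\bar\varphi$ and to $1$.
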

By minimizing the bound with respect to $\alpha$, we find that the theoretically optimal choice is $\alpha \propto n^{2/(d+2)}$, for which the error estimate of 
\(
\mathbb{E}\bigl\|x_n - x^\ast\bigr\|^2
\)
is of order $\mathcal{O}\bigl(n^{-4/(d+2)}\bigr)$. This shows that, with this specific choice of $\alpha$, our LISO algorithm possesses asymptotically better performance than the random search algorithm when $d \ge 3$, since the output $x_n^{\mathrm{RS}}$ of the random search algorithm has an error estimate
\(
\mathbb{E}\bigl\|x_n^{\mathrm{RS}} - x^\ast\bigr\|^2 = \mathcal{O}\bigl(n^{-2/d}\bigr)
\).


\subsection{Laplace approximation}

Let us introduce the fundamental Laplace principle underlying our analysis.
We will consider integrals of the form
\[
\int \exp(-\alpha f(x)) \, \varphi(x)\, dx,
\]
for some function \( \varphi : \mathbb{R}^d \to \mathbb{R}^q \).
The assumption required for this function \( \varphi := (\varphi_i)_{i\in[q]} \) is given below. Define
\begin{align*}
    C_{f,\varphi} := 
    \sum_{i\in[q]}\abs{ \mathrm{Tr}( H_{\varphi_i}(x^\ast)) - \ps{\nabla\varphi_i(x^\ast),\mathrm{PTr}(D^3 f(x^\ast))}},
    \end{align*}    
where $H_{\varphi_i}$ denotes the Hessian of $\varphi_i$, $\mathrm{Tr}$ denotes the trace operator, and the partial trace of the tensor $D^3 f$ is defined as
\(
\mathrm{PTr}(D^3 f(x^\ast)) := \sum_{k \in [d]} \bigl(\partial_{ikk} f(x^\ast)\bigr)_{i \in [d]}
\).

\begin{assumption}\label{hyp:varphi}
The function $\varphi$ is twice continuously differentiable and $C_{f,\varphi}>0$. Moreover $\varphi $ satisfies the integrability condition:
$$
\int \exp (- f(x) ) \|\varphi(x)\| \,\dr x < \infty.
$$  
\end{assumption}

Since our result will hold for large values of $\alpha$, the assumption above can be weakened to the Lebesgue-integrability of  $ \exp (- \alpha_0 f(x )) \|\varphi(x)\| $ for some $\alpha_0 >1$. While this would allow Assumption~\ref{hyp:varphi} to hold on a slightly larger set of functions $f$ and $\varphi$, this would diminish clarity of the statement.

Recall that $\pi_\alpha$ is the probability distribution with density proportional to $\exp\bigl(-\alpha f\bigr)$. Then, both Assumptions~\ref{hyp:f} and~\ref{hyp:varphi} ensure that the quantity $\pi_\alpha(\varphi)$ is well-defined for $\alpha \ge 1$ and, in fact, this quantity approaches $\varphi(x^\ast)$ as $\alpha \to \infty$.  
This result, known as the \emph{Laplace principle}, is formalized below in the lemma below. It also establishes that the unnormalized quantity
\(
\int \exp(-\alpha f(x)) \, \varphi(x) \, dx
\)
is of order $\alpha^{-d/2} \exp\bigl(-\alpha f(x^\ast)\bigr)$.
\begin{lemma}[Laplace principle from \cite{kirwin2010higherasymptoticslaplacesapproximation}, Theorem~1.1]
\label{lemma:laplace_approx}
Let Assumptions~\ref{hyp:f} and~\ref{hyp:varphi}  hold true. 
For every $\alpha>1$, we obtain
\[
    \norm{ m_\alpha \int \ex^{-\alpha (f(y)- f(x^\ast))} \varphi(y) \,\mathrm d y - \varphi(x^\ast)} \le    \frac {C }{\alpha},
\]
with 
\begin{equation}\label{eq:mlpha}
 m_\alpha := \sqrt{\det H_f(x^\ast)}\p{\frac{\alpha}{2\pi}}^{d/2}\,,   
\end{equation}
and the constant $C $ depends on $f,\varphi$ only. 
Moreover, there exists $\tilde \alpha>1$ depending on $f,\varphi$ only, such that for $\alpha \geq \tilde \alpha$:
$$\norm{ \pi_\alpha( \varphi) - \varphi(x^\ast)}\le   \frac {C_{f,\varphi} } \alpha\,. $$ 
\end{lemma}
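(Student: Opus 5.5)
The plan is a standard Laplace-method argument with three steps: localize the integral around $x^\ast$, Taylor-expand inside a shrinking ball, and bound the tails with the integrability assumptions. Then I obtain the second claim by applying the first to $\varphi$ and to the constant function $1$ and taking the ratio. Without loss of generality, set $x^\ast=0$ and $f(x^\ast)=0$, and write $H=H_f(0)\succ 0$.

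\textbf{Step 1 (localization).} Fix a small radius $\delta>0$ such that $f(y)\ge \tfrac14 y^\top H y$ on $B(0,\delta)$; this is possible by positive definiteness and $C^2$ regularity. Since $x^\ast$ is the unique global minimizer, I would also like $\eta:=\inf_{\|y\|\ge\delta} f(y)>0$. This needs a short argument because the domain is unbounded. Integrability of $e^{-f}$ and continuity give that $f$ cannot approach $0$ along a sequence escaping to infinity on a set of positive measure. I expect this to be the delicate point. If needed, I would fix it by a tail estimate that does not require $\eta>0$ uniformly. Let $A=\{\|y\|\ge\delta\}$. For $\alpha>1$,
\[
m_\alpha\int_{A} e^{-\alpha f}\|\varphi\| \le m_\alpha e^{-(\alpha-1)\eta}\int e^{-f}\|\varphi\|,
\]
which is $O(\alpha^{d/2}e^{-\alpha\eta})=o(\alpha^{-1})$, using Assumption~\ref{hyp:varphi}. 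The same bound holds with $\varphi\equiv 1$, using Assumption~\ref{hyp:f}.

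\textbf{Step 2 (local expansion).} On the ball, substitute $y=u/\sqrt{\alpha}$. Then
\[
\alpha f(u/\sqrt\alpha)=\tfrac12 u^\top H u+\tfrac{1}{6\sqrt\alpha}D^3f(0)[u,u,u]+R_4,
\qquad |R_4|\lesssim \|u\|^4/\alpha,
\]
and
\[
\varphi(u/\sqrt\alpha)=\varphi(0)+\tfrac{1}{\sqrt\alpha}\nabla\varphi(0)^\top u+\tfrac{1}{2\alpha}u^\top H_\varphi(0) u+o(\|u\|^2/\alpha).
\]
Expand $e^{-\alpha f}=e^{-u^\top Hu/2}\bigl(1-\tfrac{1}{6\sqrt\alpha}D^3f[u,u,u]+O((\|u\|^4+\|u\|^6)/\alpha)\bigr)$. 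The factor $1/\sqrt\alpha$ terms are odd in $u$ and vanish against the Gaussian. What remains is $\varphi(0)$ plus an $\alpha^{-1}$ term, obtained from Gaussian moments. That term involves the trace of $H_\varphi$ and the partial trace of $D^3 f$ paired with $\nabla\varphi$, exactly the quantities in $C_{f,\varphi}$ (up to $H$-dependent normalizations). The Jacobian $\alpha^{-d/2}$ combined with $m_\alpha$ gives the Gaussian normalization. Remainders are controlled by dominated convergence with the Gaussian envelope $e^{-u^\top Hu/4}$ on $\|u\|\le\delta\sqrt\alpha$. To get a genuine $O(1/\alpha)$ rather than $o$, I use the fourth-order smoothness of $f$ and the $C^2$ bound on $\varphi$ over the compact ball. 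Together with Step 1, this gives the first inequality for $\alpha$ large. For $\alpha$ in a bounded range $(1,\alpha_1]$, the left side is simply bounded, so the constant can be enlarged.

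\textbf{Step 3 (normalized version).} Write $\pi_\alpha(\varphi)=N_\alpha/D_\alpha$, with $N_\alpha=m_\alpha\int e^{-\alpha f}\varphi=\varphi(0)+a/\alpha+o(1/\alpha)$ and $D_\alpha=1+b/\alpha+o(1/\alpha)$. Then
\[
\pi_\alpha(\varphi)-\varphi(0)=\frac{a-b\varphi(0)}{\alpha}+o(1/\alpha).
\]
Computing $a-b\varphi(0)$ from Step 2, the contributions of the $f$-only terms cancel. The residual leading coefficient has norm bounded by the explicit quantity $C_{f,\varphi}$, up to the normalization carried in its definition. Since $C_{f,\varphi}>0$, I would choose $\tilde\alpha$ so that the $o(1/\alpha)$ remainder is absorbed: compare the exact first-order coefficient against $C_{f,\varphi}$ and use any slack. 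Matching this constant exactly is the step I expect to be most fragile. If the computed coefficient equals $C_{f,\varphi}$ exactly rather than being strictly smaller, I would instead conclude with the bound $2C_{f,\varphi}/\alpha$ for $\alpha\ge\tilde\alpha$, which is the form actually needed downstream.
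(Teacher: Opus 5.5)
The paper does not prove this lemma; it imports it from Kirwin's Theorem~1.1. Your direct Laplace-method route is the standard argument behind such results, and Step~2 (with your patch for bounded $\alpha$) is sound. However, Steps~1 and~3 have genuine gaps, and in both cases the missing ingredient cannot be derived from Assumptions~\ref{hyp:f}--\ref{hyp:varphi}.

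\textbf{Step 1.} The gap $\eta>0$ does not follow from Assumption~\ref{hyp:f}, and no tail estimate that avoids it can work. Integrability of $e^{-f}$ only makes each sublevel set $\{f-f(x^\ast)\le\epsilon\}$ of finite measure. It still allows smooth thin wells centred at points $y_k$ with $\norm{y_k}\to\infty$, on which $f-f(x^\ast)\le 2/k$ over a set of volume of order $k^{-2}$. Such an $f$ is still $C^4$, with a unique nondegenerate minimizer and $\int e^{-f}<\infty$. Yet for a bounded $\varphi\ge 1$ satisfying Assumption~\ref{hyp:varphi}, the wells add at least of order $m_\alpha\sum_{k\ge 2\alpha}k^{-2}\asymp\alpha^{d/2-1}$ to $m_\alpha\int e^{-\alpha(f-f(x^\ast))}\varphi$, so the $C/\alpha$ bound fails. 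You have to add a separation hypothesis, e.g.\ $\liminf_{\norm{y}\to\infty}f(y)>f(x^\ast)$. Together with continuity and uniqueness, this gives $\eta>0$ for every $\delta$.

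\textbf{Step 3.} The issue is not strict versus non-strict inequality: the first-order coefficient is in general not controlled by $C_{f,\varphi}$ at all. Carry out your Step~2 with $\Sigma:=H_f(x^\ast)^{-1}$ and $T[\Sigma]_j:=\sum_{k,l}\partial_{jkl}f(x^\ast)\Sigma_{kl}$. Componentwise this gives
\[
\pi_\alpha(\varphi_i)-\varphi_i(x^\ast)=\frac{1}{2\alpha}\Bigl(\mathrm{Tr}\bigl(\Sigma H_{\varphi_i}(x^\ast)\bigr)-\ps{\nabla\varphi_i(x^\ast),\Sigma\,T[\Sigma]}\Bigr)+o(1/\alpha),
\]
with the quartic term of $f$ and the squared cubic term cancelling in the ratio, as you anticipated. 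But $C_{f,\varphi}$ contains neither $\Sigma$ nor the factor $\tfrac12$.

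When $H_f(x^\ast)=I_d$, the coefficient has norm at most $C_{f,\varphi}/2$. The slack $C_{f,\varphi}/(2\alpha)>0$ then absorbs the remainder and your argument closes; the equality case you feared never occurs.

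For general $H_f(x^\ast)$ the claim is false. Take $f(x)=\tfrac h2\norm{x}^2$ and $\varphi(x)=\norm{x}^2$: both assumptions hold with $C_{f,\varphi}=2d$, but $\pi_\alpha(\varphi)=d/(h\alpha)$. This exceeds $2d/\alpha$ for $h<1/2$, and exceeds your fallback $4d/\alpha$ for $h<1/4$. So Step~3 needs either the normalization $H_f(x^\ast)=I_d$ or a constant rebuilt from the $\Sigma$-weighted coefficient above.
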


We remark that, in Lemma~\ref{lemma:laplace_approx} above, the constant $C_{f,\varphi}$ in the second statement is made explicit, whereas the constant $C$ in the first statement is not. Although this could be done using \cite {kirwin2010higherasymptoticslaplacesapproximation}, we choose not to do so in order to preserve simplicity, since in our main theorems the constant $C$ is involved in higher order terms than $C_{f,\varphi}$ in the upper bound on the error.

Note that Lemma~\ref{lemma:laplace_approx} above holds under the condition $C_{f,\varphi} > 0$. The constant $C_{f,\varphi}$ vanishes, for instance, when $\varphi$ is linear and $f$ is quadratic. However, in this case, using a classical result on Gaussian vectors with covariance matrix $\alpha^{-1} I_d$, Lemma~\ref{lemma:laplace_approx} still holds.  
A situation where Lemma~\ref{lemma:laplace_approx} would not hold is when $\varphi$ is linear and $f$ is quadratic on an open set around $x^\ast$ but not outside this set. In this case, the error $\| \pi_\alpha(\varphi) - \varphi(x^\ast) \|$ would not be zero, as Lemma~\ref{lemma:laplace_approx} implies, but rather a quantity of higher order in $\alpha^{-1}$. For simplicity, we have excluded these particular cases from our study.

\subsection{Analysis of the basic LISO scheme}

Recall that $q_0: \mathbb R^d\to \mathbb R_{\geq 0} $ is the probability density function  used to sample the points.  Let $(X^{i})_{i \in [n]} $ be an independent collection of random variables with common distribution $ q_0$. The importance weights are defined as
    $$w^{i} := \frac{\exp(-\alpha f(X^{i}))}{q_0(X^{i})}.$$
The LISO estimator is 
\begin{align*}
&\pi_{n,\alpha}(\varphi)  := \frac{\sum_{i \in [n]} w^{i} \, \varphi  (X^{i})} {\sum_{i \in [n]} w^{i}}.
\end{align*}
LISO is part of the family of \textit{self-normalized importance sampling estimators} \cite{agapiou2017importance,portier2018asymptotic}. It satisfies the following invariance property: the unknown weights  
\[
w^{\ast i}  = \frac{\exp\bigl(-\alpha (f(X^{i}) - f(x^\ast))\bigr)}{q_0(X^{i})}
\] 
can be used in place of $w^i$ without changing the estimator $\pi_\alpha(\varphi)$. It means that
\begin{align}\label{self_norm}
\pi_{n,\alpha}(\varphi) = \frac{\sum_{i \in [n]} w^{\ast i} \, \varphi  (X^{i})} {\sum_{i \in [n]} w^{\ast i}}.    
\end{align}

This straightforward property turns out to be useful in our mathematical development, as $f - f(x^\ast)\ge 0$.
The next assumption states that the function $q_0$ dominates the measure $\pi_\alpha$. By Equation~\eqref{self_norm}, it can be formulated in terms of the nonnegative function \(f(x) - f(x^\ast)\) instead of \(f\).

\begin{assumption}\label{hyp:q0}
The function \(q_0\) is a twice continuously differentiable density supported on \(\mathbb{R}^d\), and the tail of \(q_0\) is heavy compared to that of \(\exp(-(f - f(x^\ast)))\), i.e., there exists \(c_0>0\) such that
        $$\forall x\in \mathbb R^d ,\qquad \exp (- ( f(x) - f(x^\ast)  )  \leq c_0 q_0(x)  .$$
       Moreover, $ \varphi  $ satisfies the following integrability conditions:
        \begin{align*}
       & \int \exp (- ( f(y) - f(x^\ast))) \|\varphi(y)\|^2\, \dr y< \infty \,,\\
        &    \int  \|\varphi(y)\|^4 q_0 (x) \, \dr y< \infty \,.
        \end{align*}
        In addition, we assume that  $\| \varphi \| ^2/q_0 $, $\| \varphi \| ^2/q_0$, $1/q_0$ are twice-continuously differentiable, and  $C_{f, \|\bar \varphi \| ^2/q_0 } >0 $, $C_{f, \|\bar \varphi \| ^2/q_0 }>0$, $C_{f,1/q_0}$, where we define \(\bar\varphi = \varphi - \varphi(x^\ast)\).
\end{assumption}

Similarly to Assumption~\ref{hyp:varphi}, one could weaken these assumptions by requiring the existence of \(\alpha_0 > 1\) such that the above conditions hold with \(\exp(-\alpha_0 (f - f(x^\ast)))\). Note also that, by Jensen's inequality, the moment condition involving \(\|\varphi\|^2\) stated above is stronger than the moment condition involving \(\|\varphi\|\) in Assumption~\ref{hyp:varphi}.

Our main theorem on LISO is obtained by controlling the variance of the importance sampling estimator $\pi_{n,\alpha}(\varphi)$. To this end, we need to control the variance of the importance weights $w^{\ast i}$. This justifies Assumption~\ref{hyp:q0}, since it ensures that this variance is bounded:
\[
\int \exp\bigl(-2 ( f(y) - f(x^\ast))\bigr)\,\frac{\|\varphi(y)\|^2}{q_0(y)} \,dy < \infty .
\]
Moreover, the additional requirements of Assumption~\ref{hyp:q0} allow us to apply the Laplace principle (Lemma~\ref{lemma:laplace_approx}) to the function \(\|\varphi\|^2 / q_0\), thereby enabling a better control of the variance of the weights.

Let us now state our first main result - an upper bound on the mean squared error of the LISO method, which is proven in Section~\ref{sec:proofTh1} of the appendix.

\begin{theorem}
\label{th:1}
    
Let Assumption~\ref{hyp:f} and \ref{hyp:q0} be fulfilled. There exist a constant \( c>0\) and a constant \(C>0\), depending on \(f,q_0,\varphi\), a constant \(\tilde \alpha>0\), depending on \(f,\varphi\), such that for every \(\alpha>\tilde \alpha\) and every \(n\) satisfying
\[
n \geq c {\alpha^{d/2}\log\!\bigl(n{\alpha^{-d/2+1}}(\sqrt n +\alpha^{d/4})\alpha\bigr)} ,
\]
we have, with $\bar \varphi:=\varphi -\varphi(x^\ast)$,
\begin{equation*}
\mathbb{E} \bigl\|\pi_{n,\alpha}(\varphi)-\pi_{\alpha}(\varphi)\bigr\|^2
\le
9\,n^{-1} m_\alpha\,2^{-d/2}
\frac{C_{ f, \|\bar\varphi\|^2/q_0}}{2\alpha}
\left(1+\frac{C}{\alpha}\right).
\end{equation*}
\end{theorem}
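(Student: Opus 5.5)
We use the standard linearization of a self-normalized importance sampling estimator and treat the small-denominator event separately. By \eqref{self_norm} we may work with the weights $w^{\ast i}$. We rescale them as $\tilde w^i := m_\alpha w^{\ast i}$ and write
\[
\pi_{n,\alpha}(\varphi)-\pi_\alpha(\varphi)=\frac{\frac1n\sum_i \tilde w^i\,\bigl(\varphi(X^i)-\pi_\alpha(\varphi)\bigr)}{\frac1n\sum_i \tilde w^i}=:\frac{N_n}{D_n}.
\]
The first statement of Lemma~\ref{lemma:laplace_approx}, applied with $\varphi\equiv 1$, gives $\mathbb E[D_n]=m_\alpha\int e^{-\alpha(f-f(x^\ast))}=1+O(1/\alpha)$. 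Also $\mathbb E[N_n]=0$ by definition of $\pi_\alpha(\varphi)$. We split on the event $A=\{D_n\ge 1/3\}$ and its complement. On $A$ we get $\|N_n/D_n\|^2\le 9\|N_n\|^2$, which is where the factor $9$ comes from. On $A^c$ the self-normalized estimator is a convex combination of the $\varphi(X^i)$, so $\|\pi_{n,\alpha}(\varphi)-\pi_\alpha(\varphi)\|^2\le 2\max_i\|\varphi(X^i)\|^2+2\|\pi_\alpha(\varphi)\|^2$. Cauchy--Schwarz and the fourth-moment assumption $\int\|\varphi\|^4q_0<\infty$ then bound this contribution by $C\,n^{1/2}\,\bP(A^c)^{1/2}$.

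\textbf{Main variance term.} Since the summands are i.i.d. and centered,
\[
\mathbb E\|N_n\|^2=\frac{m_\alpha^2}{n}\int e^{-2\alpha(f-f(x^\ast))}\frac{\|\varphi-\pi_\alpha(\varphi)\|^2}{q_0}\,dy .
\]
We replace $\pi_\alpha(\varphi)$ by $\varphi(x^\ast)$, paying an $O(1/\alpha)$ relative error by Lemma~\ref{lemma:laplace_approx}, so the integrand becomes $\|\bar\varphi\|^2/q_0$. This integral involves the exponent $2\alpha$, so we apply Lemma~\ref{lemma:laplace_approx} at $2\alpha$, using $m_{2\alpha}=2^{d/2}m_\alpha$. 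Because $\bar\varphi(x^\ast)=0$, the zeroth-order term vanishes. The second statement of the lemma applied to $\|\bar\varphi\|^2/q_0$ gives $m_{2\alpha}\int e^{-2\alpha(f-f^\ast)}\|\bar\varphi\|^2/q_0\le C_{f,\|\bar\varphi\|^2/q_0}/(2\alpha)\,(1+C/\alpha)$; the normalization constant of $\pi_{2\alpha}$ is $1+O(1/\alpha)$ by the same lemma. Multiplying by $m_\alpha^2/m_{2\alpha}=m_\alpha 2^{-d/2}$ yields exactly the claimed term $n^{-1}m_\alpha 2^{-d/2}C_{f,\|\bar\varphi\|^2/q_0}/(2\alpha)$.

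\textbf{Denominator deviation.} I would apply Bernstein's inequality to $D_n$. Assumption~\ref{hyp:q0} gives $\tilde w^i\le m_\alpha c_0$, which is of order $\alpha^{d/2}$. Lemma~\ref{lemma:laplace_approx} applied to $1/q_0$ at $2\alpha$ gives $\mathrm{Var}(\tilde w)\le m_\alpha^2\int e^{-2\alpha(f-f^\ast)}/q_0\lesssim m_\alpha$, again of order $\alpha^{d/2}$. For large $\alpha$ we have $\mathbb E[D_n]\ge 2/3$, so Bernstein yields $\bP(D_n<1/3)\le\exp(-c'n\alpha^{-d/2})$. We need this to be smaller than the main term after multiplication by $C\sqrt n$, that is,
\[
\sqrt n\,e^{-c'n\alpha^{-d/2}/2}\lesssim \frac{\alpha^{d/2-1}}{n}\cdot\frac1\alpha .
\]
This is exactly the condition $n\ge c\,\alpha^{d/2}\log\bigl(n\alpha^{-d/2+1}(\sqrt n+\alpha^{d/4})\alpha\bigr)$. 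The $\alpha^{d/4}$ inside the logarithm comes from also bounding the $\|\pi_\alpha(\varphi)\|$ term, or equivalently from using the variance of the weighted $\varphi$ in the Cauchy--Schwarz step. The resulting contribution is then absorbed into the factor $(1+C/\alpha)$.

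\textbf{Main obstacle.} The delicate step is bookkeeping the complement event $A^c$ so that it is genuinely of relative order $1/\alpha$. This requires a clean tail bound on $\max_i\|\varphi(X^i)\|$ under $q_0$, or an alternative bound on $\|N_n\|^2\mathbf 1_{A^c}$ via fourth moments of the weights. If the $\max$ bound proves too crude, I would instead use $\|N_n/D_n\|\le \max_i\|\varphi(X^i)-\pi_\alpha(\varphi)\|$ combined with Hölder and the fourth-moment condition.
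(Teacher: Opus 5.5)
Your proposal is correct and is essentially the paper's proof. Both use the same self-normalized ratio with the denominator bounded below on an event $A$, Cauchy--Schwarz together with $\mathbb E\max_i\|\varphi(X^i)\|^4\le n\,\mathbb E\|\varphi(X^1)\|^4$ on $A^c$, Bernstein's inequality for $\bP(A^c)$, and the Laplace principle at $2\alpha$ exploiting $\bar\varphi(x^\ast)=0$ via $m_\alpha^2/m_{2\alpha}=2^{-d/2}m_\alpha$. The one difference is cosmetic: you treat the numerator as a single centered i.i.d.\ sum with threshold $1/3$, whereas the paper splits it as $\bigl(\tilde\pi_{n,\alpha}(\bar\varphi)-\pi_\alpha(\bar\varphi)\bigr)-\pi_\alpha(\bar\varphi)\bigl(\tilde\pi_{n,\alpha}(1)-1\bigr)$, applies $(a+b)^2\le 2a^2+2b^2$ and Proposition~\ref{prop:unnormalized} twice with threshold $1/2$ (getting $8$, then $9$ after absorbing $A^c$), and thereby sidesteps the cross term $\langle\pi_\alpha(\bar\varphi),\int e^{-2\alpha f}\bar\varphi/q_0\rangle$, which in your version must be kept signed and bounded by Laplace applied to $\bar\varphi/q_0$ to be of relative order $1/\alpha$ (Cauchy--Schwarz alone gives only $\alpha^{-1/2}$).
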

We decompose 
            $$\bE\norm{\pi_{n,\alpha}(\varphi) - \varphi(x^\ast)}^2\le \bE\norm{\pi_{n,\alpha}(\varphi) - \pi_{\alpha}(\varphi)}^2 + \norm{\pi_{\alpha}(\varphi) - \varphi(x^\ast)}^2 . $$ 
            Then applying Theorem~\ref{th:1} and Lemma~\ref{lemma:laplace_approx}, respectively to each term, we obtain the following corollary.
            
\begin{coro}\label{cor:LISO} 
Under the assumptions of Theorem~\ref{th:1}, with Assumption~\ref{hyp:varphi} additionally required, for every $\alpha,n$ satisfying the conditions of Theorem~\ref{th:1}, we obtain
\begin{equation*}
    \mathbb{E}\bigl\|\pi_{n,\alpha}(\varphi)-\varphi(x^\ast)\bigr\|^2
\le 9\,n^{-1} m_\alpha\,2^{-d/2}
\frac{C_{ f, \|\bar\varphi\|^2/q_0}}{2\alpha}
\left(1+\frac{C}{\alpha}\right) +\frac{\bigl(C_{f,\varphi}\bigr)^2}{\alpha^{2}}\,.
\end{equation*}

\end{coro}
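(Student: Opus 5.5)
The corollary follows from a bias--variance split, exactly as indicated just before its statement. Write $\pi_{n,\alpha}(\varphi)-\varphi(x^\ast) = \bigl(\pi_{n,\alpha}(\varphi)-\pi_\alpha(\varphi)\bigr) + \bigl(\pi_\alpha(\varphi)-\varphi(x^\ast)\bigr)$. The second term is deterministic, because $\pi_\alpha(\varphi)$ is a fixed vector once $\alpha$ is fixed. We expand the squared norm and take expectations, which gives
\[
\mathbb{E}\bigl\|\pi_{n,\alpha}(\varphi)-\varphi(x^\ast)\bigr\|^2 = \mathbb{E}\bigl\|\pi_{n,\alpha}(\varphi)-\pi_\alpha(\varphi)\bigr\|^2 + 2\bigl\langle \mathbb{E}[\pi_{n,\alpha}(\varphi)]-\pi_\alpha(\varphi),\, \pi_\alpha(\varphi)-\varphi(x^\ast)\bigr\rangle + \bigl\|\pi_\alpha(\varphi)-\varphi(x^\ast)\bigr\|^2 .
\]

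The main obstacle is the cross term. The self-normalized estimator is biased, so the cross term does not vanish outright. There are two ways to handle it.

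\begin{itemize}
\item \textbf{Keeping the constants.} We would show that the bias $\|\mathbb{E}\pi_{n,\alpha}(\varphi)-\pi_\alpha(\varphi)\|$ is of order $n^{-1}m_\alpha\alpha^{-1}$. This is smaller than the square root of the variance bound. The standard argument linearizes the ratio $\sum w^{\ast i}\varphi(X^i)/\sum w^{\ast i}$ around $\pi_\alpha(\varphi)$, and it reuses the concentration event for the denominator that Theorem~\ref{th:1} already controls under the condition on $n$. The resulting cross term is then absorbed into the factor $(1+C/\alpha)$ of the first summand.
\item \textbf{Simpler route.} We would apply $\|a+b\|^2\le 2\|a\|^2+2\|b\|^2$. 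This proves the stated bound up to a harmless factor of $2$.
\end{itemize}

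I would first check whether the constant $9$ in Theorem~\ref{th:1} already comes from a decomposition that dominates $\mathbb{E}\|\pi_{n,\alpha}(\varphi)-\varphi(x^\ast)\|^2$ directly. If it does, the cross term never appears, and the decomposition displayed in the text holds as stated.

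With the decomposition in hand, the two terms are bounded as follows.

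\begin{itemize}
\item \textbf{Variance term.} Theorem~\ref{th:1} applies verbatim, since $\alpha>\tilde\alpha$ and $n$ satisfies its sample-size condition. It gives the bound $9\,n^{-1} m_\alpha\,2^{-d/2} \frac{C_{f,\|\bar\varphi\|^2/q_0}}{2\alpha}\bigl(1+\frac{C}{\alpha}\bigr)$.
\item \textbf{Bias term.} Assumption~\ref{hyp:varphi} together with Assumption~\ref{hyp:f} lets us invoke the second statement of Lemma~\ref{lemma:laplace_approx}. For $\alpha\ge\tilde\alpha$ it gives $\|\pi_\alpha(\varphi)-\varphi(x^\ast)\|\le C_{f,\varphi}/\alpha$, and squaring yields $(C_{f,\varphi})^2/\alpha^2$.
\end{itemize}

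To apply both results at once, $\tilde\alpha$ is taken as the maximum of the two thresholds, both of which depend only on $f$ and $\varphi$. Adding the two bounds gives the corollary.
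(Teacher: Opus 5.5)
\paragraph{Relation to the paper's proof.} Your skeleton is the paper's. Its whole proof is the inequality $\bE\|\pi_{n,\alpha}(\varphi)-\varphi(x^\ast)\|^2\le\bE\|\pi_{n,\alpha}(\varphi)-\pi_\alpha(\varphi)\|^2+\|\pi_\alpha(\varphi)-\varphi(x^\ast)\|^2$, followed by Theorem~\ref{th:1} and the second part of Lemma~\ref{lemma:laplace_approx}, i.e.\ your two closing bullets. Your objection to that inequality is correct, and it applies to the paper as well. The cross term $2\langle\bE\pi_{n,\alpha}(\varphi)-\pi_\alpha(\varphi),\,\pi_\alpha(\varphi)-\varphi(x^\ast)\rangle$ is dropped without justification, and it can be positive. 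For example, with $d=1$, $\varphi=\mathrm{id}$, $f'''(x^\ast)\neq0$ and $q_0'(x^\ast)=0$, a leading-order expansion gives a positive term of order $n^{-1}m_\alpha\alpha^{-2}$. Your preliminary check also comes out negative. The proof of Theorem~\ref{th:1} only controls $\pi_{n,\alpha}(\bar\varphi)-\pi_\alpha(\bar\varphi)$, and its constant $9$ is the $4\cdot 2=8$ obtained on $A=\{\tilde\pi_{n,\alpha}(1)\ge 1/2\}$, plus room for the exponentially small $A^c$ term.

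\paragraph{The gap.} Neither of your repairs proves the displayed inequality. The $2\|a\|^2+2\|b\|^2$ route is sound, but it gives $18$ and $2(C_{f,\varphi})^2/\alpha^2$. The constant-preserving route needs $\|\bE\pi_{n,\alpha}(\varphi)-\pi_\alpha(\varphi)\|\lesssim n^{-1}m_\alpha\alpha^{-1}$. That is the right leading order, but the linearization you describe does not deliver it. Put $Y=\tilde\pi_{n,\alpha}(\bar\varphi)-\pi_\alpha(\bar\varphi)\tilde\pi_{n,\alpha}(1)$ and $D=\tilde\pi_{n,\alpha}(1)$. The bias is then $\bE[Y/D]$, with $\bE Y=0$, $\bE\|Y\|^2\lesssim n^{-1}m_\alpha\alpha^{-1}$ and $\bE(D-1)^2\lesssim n^{-1}m_\alpha$.
\begin{itemize}
\item Grant that the first-order term $-\bE[Y(D-1)]$ is $O(n^{-1}m_\alpha\alpha^{-1})$. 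This already requires a Laplace expansion of $\pi_{2\alpha}(\bar\varphi/q_0)$, since Cauchy--Schwarz only gives $\alpha^{-1/2}$.
\item Even then, the remainder $\bE[Y(D-1)^2/D;A]$ is only $O(n^{-3/2}m_\alpha^{3/2}\alpha^{-1/2})$ by Cauchy--Schwarz. This is of the required size only if $n\gtrsim\alpha^{d/2+1}$.
\end{itemize}
That condition holds at the tuned choice $\alpha\propto n^{2/(d+2)}$. However, Theorem~\ref{th:1} allows $n$ down to $\alpha^{d/2}\log(\cdot)$. In that range you get a cross term of relative size $\alpha^{-1/2}$, which $(1+C/\alpha)$ cannot absorb.

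\paragraph{How to close it.} The crude size is in fact enough if you open up Theorem~\ref{th:1} rather than quote it. Write $\bE[Y/D]=-\bE[Y(D-1)/D;A]-\bE[Y;A^c]+\bE[Y/D;A^c]$. Then $\|\bE[Y(D-1)/D;A]\|\le 2(\bE\|Y\|^2\,\bE(D-1)^2)^{1/2}\lesssim n^{-1}m_\alpha\alpha^{-1/2}$, and the $A^c$ pieces are exponentially small as in that proof. Set $V_0:=n^{-1}m_\alpha\,2^{-d/2}C_{f,\|\bar\varphi\|^2/q_0}/(2\alpha)$. The cross term is then at most $K\alpha^{-1/2}V_0$, and $K\alpha^{-1/2}\le\frac12+\frac{K^2}{2\alpha}$. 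The proof of Theorem~\ref{th:1} actually produces $8V_0(1+C_2/\alpha)$, so there is slack up to the stated $9V_0(1+C/\alpha)$. The $\frac12V_0$ fits into that slack, provided you choose the sample-size constant $c$ so that the $A^c$ terms use at most the other $\frac12 V_0$. The $\frac{K^2}{2\alpha}V_0$ is absorbed by enlarging $C$. This yields the corollary verbatim, with coefficient $1$ on $(C_{f,\varphi})^2/\alpha^2$.
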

This last corollary requires the additional Assumption~\ref{hyp:varphi}, since it relies on the use of Lemma~\ref{lemma:laplace_approx} applied to \(\varphi\), which was not needed in the proof of Theorem~\ref{hyp:f}. We note that Corollary~\ref{cor:LISO} allows us to deduce the informal Theorem~\ref{th:informal} with $\varphi =\mathrm{id}$ where $\mathrm{id} : x\mapsto x$.

\paragraph{Proof ideas}
\label{sec:sketchTh1}

Since the random variables
\(
\frac{w^i \varphi(X^i)}{\sum_{i\in[n]} w^i}
\)
are not independent, standard theory (law of large number, Bernstein inequality...) cannot be applied directly in the proof of Theorem~\ref{th:1}.


Then, we first study the \emph{unnormalized version} of $\pi_{n,\alpha}(\varphi)$:
\begin{align*}
    \tilde \pi_{n,\alpha}(\varphi) = \frac{1}{n Z_\alpha} \sum_{i \in [n]} w^i \, \varphi(X^i),
\end{align*}
where
\(
Z_\alpha := \int \exp(-\alpha f)
\).
In this case, $\tilde \pi_{n,\alpha}(\varphi)$ is the average over $n$  i.i.d. random variables $Z_\alpha^{-1}w^{ i} \varphi^i$, which have expectation $\pi_\alpha(\varphi)$ and bounded variance, by Assumption~\ref{hyp:q0}.
Then, we obtain the following result, which is proven in Section~\ref{sec:ProofProp}.
\begin{prop}\label{prop:unnormalized}
Suppose that Assumptions~\ref{hyp:f} and \ref{hyp:q0} are fulfilled. 
There exists a constant \(\tilde \alpha>1\), depending on \(f\) and \(\varphi\), such that, 
for every \(\alpha>\tilde \alpha\), 
\begin{align*}
&\mathbb{E}\!\left[\bigl\|\tilde \pi_{n,\alpha}(\varphi)-\pi_{\alpha}(\varphi)\bigr\|^2\right]\le n^{-1} 2^{-d/2}  m_{\alpha}  \left( \frac{\norm{\varphi(x^\ast)}^2}{q_0(x^\ast)} + \frac{C_{f,\norm{\varphi} ^2 /q_0 } }{2 \alpha} \right)\left( 1+   \frac C \alpha   \right),    
\end{align*}
where $C$ depends only on $f$ and $C_{f,\varphi}$ is the constant of Lemma \ref{lemma:laplace_approx}. 
\end{prop}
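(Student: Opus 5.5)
Since $\tilde\pi_{n,\alpha}(\varphi)$ is an average of $n$ i.i.d.\ vectors $Y^i := Z_\alpha^{-1} w^i \varphi(X^i)$, the plan is a direct second-moment computation followed by an application of Lemma~\ref{lemma:laplace_approx} to a suitable test function. First, $\mathbb{E}[Y^i] = Z_\alpha^{-1}\int e^{-\alpha f}\varphi = \pi_\alpha(\varphi)$, so the estimator is unbiased and
\[
\mathbb{E}\bigl\|\tilde \pi_{n,\alpha}(\varphi)-\pi_{\alpha}(\varphi)\bigr\|^2 = \frac1n\bigl(\mathbb{E}\|Y^1\|^2 - \|\pi_\alpha(\varphi)\|^2\bigr)\le \frac1n \mathbb{E}\|Y^1\|^2 .
\]
Dropping $\|\pi_\alpha(\varphi)\|^2$ costs nothing at the order we aim for. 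Next, using self-normalization invariance (replace $f$ by $f-f(x^\ast)\ge 0$ in both $w^i$ and $Z_\alpha$), we write
\[
\mathbb{E}\|Y^1\|^2 = \frac{\int e^{-2\alpha (f-f(x^\ast))}\,\|\varphi\|^2/q_0}{\bigl(\int e^{-\alpha(f-f(x^\ast))}\bigr)^2}.
\]
This is finite by Assumption~\ref{hyp:q0}: since $\alpha\ge 1$ and $e^{-(f-f^\ast)}\le c_0 q_0$, we have $e^{-2\alpha(f-f^\ast)}/q_0 \le c_0 e^{-(f-f^\ast)}$, and $\|\varphi\|^2 e^{-(f-f^\ast)}$ is integrable.

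Then we apply Lemma~\ref{lemma:laplace_approx} twice. For the numerator we use the function $\psi=\|\varphi\|^2/q_0$ at parameter $2\alpha$ (allowed since $C_{f,\psi}>0$ and $\psi$ is $C^2$ by Assumption~\ref{hyp:q0}). For the denominator we use the constant function $1$ at parameter $\alpha$. Since $C_{f,1}=0$, the second, explicit statement of the lemma is not available here, so we use the first, unnormalized statement, which gives $m_\alpha\int e^{-\alpha(f-f^\ast)} = 1+O(1/\alpha)$. The numerator, in explicit form, reads
\[
m_{2\alpha}\int e^{-2\alpha(f-f^\ast)}\psi = \pi_{2\alpha}(\psi)\, m_{2\alpha}\!\int e^{-2\alpha(f-f^\ast)} \le \Bigl(\psi(x^\ast)+\frac{C_{f,\psi}}{2\alpha}\Bigr)\Bigl(1+\frac{C}{\alpha}\Bigr),
\]
with $\psi(x^\ast)=\|\varphi(x^\ast)\|^2/q_0(x^\ast)$. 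Combining,
\[
\mathbb{E}\|Y^1\|^2 \le \frac{m_\alpha^2}{m_{2\alpha}}\Bigl(\psi(x^\ast)+\frac{C_{f,\psi}}{2\alpha}\Bigr)\frac{1+C/\alpha}{(1-C/\alpha)^2}.
\]
Finally $m_\alpha^2/m_{2\alpha} = \sqrt{\det H_f(x^\ast)}\,(\alpha/2\pi)^{d}(\pi/\alpha)^{d/2} = 2^{-d/2}m_\alpha$, which yields the stated prefactor $n^{-1}2^{-d/2}m_\alpha$.

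The main obstacle is bookkeeping in the remainder terms. One must choose $\tilde\alpha$ large enough that $C/\alpha\le 1/2$, so that $(1+C/\alpha)(1-C/\alpha)^{-2}\le 1+C'/\alpha$ with a single constant depending only on $f$, since the denominator involves only $f$. One must also check that the constant coming from the first statement of the lemma for $\psi$ enters only through $\pi_{2\alpha}(\psi)$, which the second statement controls explicitly once $2\alpha\ge\tilde\alpha$. This is precisely why we factor the numerator as $\pi_{2\alpha}(\psi)$ times a normalizing constant, rather than bounding it directly.
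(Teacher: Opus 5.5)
Your proposal is correct and follows essentially the same route as the paper: you bound the i.i.d.\ variance by $n^{-1}\mathbb{E}\|Y^1\|^2$ and write this second moment as $(Z_{2\alpha}/Z_\alpha^2)\,\pi_{2\alpha}(\|\varphi\|^2/q_0)$. You then control the normalizing constants through the first statement of Lemma~\ref{lemma:laplace_approx} with $\varphi=1$ (so the remainder constant depends only on $f$), bound $\pi_{2\alpha}(\|\varphi\|^2/q_0)$ through its second statement, and conclude with $m_\alpha^2/m_{2\alpha}=2^{-d/2}m_\alpha$. The differences are cosmetic: you factor $m_{2\alpha}Z_{2\alpha}$ out of the numerator explicitly, and you add a finiteness check on the second moment that the paper leaves implicit.
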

Then, the connection between \(\tilde\pi_{\alpha}(\varphi)\) and \(\pi_{n,\alpha}(\varphi)\) follows from the observation that 
\(\pi_{n,\alpha}(\varphi) = \tilde \pi_{n,\alpha}(\varphi)/\tilde \pi_{n,\alpha}(1)\). By the result in Proposition~\ref{prop:unnormalized}, 
we have \(\tilde \pi_{n,\alpha}(1) \to \pi_\alpha(1) = 1\) and \(\tilde \pi_{n,\alpha}(\varphi) \to \pi_{n,\alpha}(\varphi) \).

We remark that in Proposition~\ref{prop:unnormalized}, the bound is of order \(m_\alpha n^{-1}\), whereas in Theorem~\ref{th:1} 
the bound is of order \(\alpha^{-1} m_\alpha n^{-1}\). This higher-order bound is obtained by noting that, since \(\pi_\alpha\) and 
\(\pi_{n,\alpha}\) are probability measures, we can write (with \(\bar\varphi = \varphi - \varphi(x^\ast)\))
\[
\pi_{n,\alpha}(\varphi) - \pi_{\alpha}(\varphi) = \pi_{n,\alpha}(\bar\varphi) - \pi_{\alpha}(\bar\varphi).
\]
This allows us to work with \(\bar\varphi\) instead of \(\varphi\). And, since \(\bar\varphi(x^\ast) = 0\), the bound 
in Proposition~\ref{prop:unnormalized} becomes of order \(\alpha^{-1} m_\alpha n^{-1}\). This achieves the proof of Theorem~\ref{th:1}.

\subsection{Analysis of adaptive LISO}\label{ss:aliso}

\subsubsection{Motivation of the adaptive variant}

We note that the performance of Algorithm~\ref{algo:IS} relies strongly on the choice of the initial distribution $q_0$. A good choice of $q_0$ would naturally be a distribution that samples with high probability around $x^\ast$.  
This observation appears in Theorem~\ref{th:1} through the constant $C_{f,\|\bar{\varphi}\|^2/q_0}$, which decreases as $q_0(x^\ast)$ increases.
In particular, in high dimensions, if the distribution $q_0$ does not concentrate around $x^\ast$, then $q_0(x^\ast)$ is small and deteriorates the bound in Theorem~\ref{th:1}. Since $x^\ast$ is not known, the only way to improve the performance of Algorithm~\ref{algo:IS} is to propose an adaptive strategy that samples new particles $X^{k+1}$ according to a distribution $q_k$ that depends on all previous particles $X^1,\dots,X^k$, chosen in the hope that $q_k(x^\ast)$ increases as $k$ grows.
While adopting such an adaptive strategy is likely to improve empirical performance, providing a rigorous theoretical justification is not straightforward, particularly when attempting to establish superiority over the non-adaptive strategy with fixed $q_0$.

Beyond the adaptive Algorithm~\ref{algo:AIS} with Gaussian proposals described in Section~\ref{sec:algo}, we consider here a slightly more general setting in which the sampling distribution \(q_n\) is chosen from a general family, not necessarily Gaussian. We assume that, for every $n$, $q_n \in \mathcal{Q}$, where $\mathcal{Q}$ is a family of distributions parametrized by their mean:
\[
\mathcal{Q} := \{ q_\theta\, : \,\theta \in \Theta \},
\]
with $\Theta \subset \mathbb{R}^d$ denoting the parameter space, and each $\theta \in \Theta$ is the mean of $q_\theta$, that is,
\(
\theta = \int x \, q_\theta(x) \, \mathrm{d}x.
\)
The sequence of sampling distributions \((q_n)_{n \ge 0}\) is referred to as the sampling policy and may depend on the previously generated particles \((X^1, \ldots, X^n)\).
The adaptive LISO algorithm can be described as follows. Starting from an initial distribution \(q_0\), at each step \(n \ge 1\), we sample \(X^n\) from \(q_{n-1}\) and compute the corresponding weights (using notation similar to that introduced earlier).
\[
w^{i} := \frac{\exp(-\alpha f(X^{i})) }{q_{i-1} (X^{i})} \qquad \forall i = 1,\ldots, n \,.
\]
In the same way as we defined the LISO estimator $\pi_{n,\alpha}(\varphi)$, we define the adaptive LISO estimator as
$$\pi_{n,\alpha} ^{(A)}(\varphi)  := \frac{\sum_{i \in [n]} w^{i} \, \varphi  (X^{i})} {\sum_{i \in [n]} w^{i}}. $$

Given this, we need to choose the next sampling probability measure \(q_n =: q_{\theta_n} \in \mathcal{Q}\). As mentioned above, we want the measure \(q_n\) to sample around \(x^\ast\) with high probability.  
If we assume that the probability measures in \(\mathcal{Q}\) concentrate around their means, the optimal choice of $\theta_n$ would be \(x^\ast\). However, since this quantity is unknown, the best available estimate is the current approximation of \(x^\ast\), which—following the result of Corollary~\ref{cor:LISO} with $\varphi=\mathrm{id}$—is given by \(\pi_{n,\alpha}^{\mathrm{(A)}}(\mathrm{id})\). Then, we choose $q_n =q_{\theta_n} $ where $\theta_n =\pi_{n,\alpha}^{\mathrm{(A)}}(\mathrm{id}) $.

To ensure an exhaustive exploration of the domain, we propose to update \(q_n\) using the following mixture:
\begin{equation*}\label{eq:mixture}
    q_n = (1-\lambda) q_{\theta_n} + \lambda q_0 \,.
\end{equation*}

This strategy—by updating only the mean and thereby avoiding variance reduction—provides additional robustness: even if \(\theta_n\) drifts away from the minimizer, the sampling distribution retains sufficient spread to safely generate samples everywhere. Moreover, with small probability, we also allow sampling from a fixed distribution \(q_0\) with heavy tails, reinforcing this idea.  


\subsubsection{A general result for adaptive LISO}

We first provide a convergence rate that is valid for any update of \(\theta_n\), provided that \(\theta_n\) belongs to a set \(\Theta\) satisfying appropriate conditions. For this reason, the procedure for updating \(\theta_n\), and consequently \(q_n\), is left unspecified here. What matters for now is only the family \(\mathcal{Q}\). Let us consider the following assumption on \(\mathcal{Q}\).

\begin{assumption}\label{hyp:Q}
Elements in the family \(\mathcal{Q}\) have sufficiently heavy tails compared to \(\exp(-\alpha f)\); that is, there exists a lower envelope \(g_0 > 0\) and a constant \(c_0 > 0\) such that, for all \(q \in \mathcal{Q}\), \(g_0 \le q\), and for every $y\in\bR^d$
\[
\exp(-(f(y) - f(x^\ast))) \le c_0 \, g_0(y) \,.
\]

Moreover, \(\varphi\) satisfies the following integrability conditions:
    \begin{align*}
        &\int   \exp(-  (f(y) - f(x^* ) )) \|\varphi (y ) \|^2 \,\dr y\\
        &\sup_{q\in \mathcal Q}  \int \|\varphi(y)\|^4 q(y) \,\dr y  < \infty.
    \end{align*}
Finally, the functions $\| \varphi \| ^2/g_0 $, $\| \varphi \| ^2/g_0$, $1/g_0$ are twice-continuously differentiable and $C_{f, \|\bar \varphi \| ^2/g_0 } $, $C_{f, \|\bar \varphi \| ^2/g_0 }$, $C_{f,1/g_0}$ are all positive constants.
\end{assumption}


We have the following upper bound on the variance of adaptive LISO.

\begin{theorem}[general bound for $ \pi_{n,\alpha} ^{(A)} (\varphi) $]\label{th:ALISO}
Let Assumption  \ref{hyp:f} and \ref{hyp:Q} be fulfilled. There exist a constant \( c>0\) and a constant \(C>0\), depending on \(f,g_0,\varphi\), a constant \(\tilde \alpha>1\), depending on \(f,\varphi\), such that for every \(\alpha>\tilde \alpha\) and every \(n\) satisfying
\[
n \geq c {\alpha^{d/2}\log\!\bigl(n{\alpha^{-d/2+1}}(\sqrt n +\alpha^{d/4})\alpha\bigr)} ,
\]
we have
\begin{align*}
&\mathbb E [ \| \pi_{n,\alpha} ^{(A)} (\varphi) - \pi_\alpha (\varphi)  \|_2^2 ] 
\leq 9 \,n^{-1} m_\alpha\,2^{-d/2}
\frac{C_{ f, \|\bar\varphi\|^2/g_0}}{2\alpha}
\left(1+\frac{C}{\alpha}\right).     
\end{align*}

\end{theorem}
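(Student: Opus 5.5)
The plan is to mirror the proof of Theorem~\ref{th:1}, replacing independence of the summands by a martingale structure. Let $\mathcal F_i$ be the $\sigma$-field generated by $X^1,\dots,X^i$, so that $q_{i-1}$ is $\mathcal F_{i-1}$-measurable. Using the unnormalized weights $w^{\ast i}$, invariance \eqref{self_norm}, and $\pi^{(A)}_{n,\alpha}(\varphi)-\pi_\alpha(\varphi)=\pi^{(A)}_{n,\alpha}(\bar\varphi)-\pi_\alpha(\bar\varphi)$, we reduce to the centred function $\bar\varphi$. We then introduce the adaptive unnormalized estimator
\[
\tilde\pi^{(A)}_{n,\alpha}(\psi)=\frac{1}{nZ^\ast_\alpha}\sum_{i\in[n]} w^{\ast i}\psi(X^i),\qquad Z^\ast_\alpha=\int \ex^{-\alpha(f-f(x^\ast))}.
\]
Each summand $\xi_i:=Z^{\ast-1}_\alpha w^{\ast i}\psi(X^i)$ satisfies $\bE[\xi_i\mid\mathcal F_{i-1}]=\pi_\alpha(\psi)$, whatever $q_{i-1}$ is, because $q_{i-1}$ dominates $\ex^{-\alpha(f-f(x^\ast))}$ by Assumption~\ref{hyp:Q}. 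Hence $\xi_i-\pi_\alpha(\psi)$ is a martingale difference sequence. Orthogonality of its increments gives
\[
\bE\norm{\tilde\pi^{(A)}_{n,\alpha}(\psi)-\pi_\alpha(\psi)}^2=n^{-2}\sum_i\bE\norm{\xi_i-\pi_\alpha(\psi)}^2 .
\]

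The key step is a uniform conditional second-moment bound that replaces Proposition~\ref{prop:unnormalized}. Using $q_{i-1}\ge g_0$,
\[
\bE\bigl[\norm{\xi_i}^2\mid\mathcal F_{i-1}\bigr]=Z^{\ast-2}_\alpha\int \ex^{-2\alpha(f-f(x^\ast))}\frac{\norm{\psi}^2}{q_{i-1}}\le Z^{\ast-2}_\alpha\int \ex^{-2\alpha(f-f(x^\ast))}\frac{\norm{\psi}^2}{g_0}.
\]
This bound is deterministic and holds uniformly in $i$. We then apply Lemma~\ref{lemma:laplace_approx} twice, at parameter $2\alpha$ to the function $\norm{\psi}^2/g_0$ and at parameter $\alpha$ to $Z^\ast_\alpha$, exactly as in the proof of Proposition~\ref{prop:unnormalized} with $q_0$ replaced by $g_0$. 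This yields the bound $n^{-1}2^{-d/2}m_\alpha\bigl(\norm{\psi(x^\ast)}^2/g_0(x^\ast)+C_{f,\norm{\psi}^2/g_0}/(2\alpha)\bigr)(1+C/\alpha)$. For $\psi=\bar\varphi$ the first term vanishes, leaving the order $\alpha^{-1}m_\alpha n^{-1}$. The integrability and positivity requirements in Assumption~\ref{hyp:Q} are exactly what licenses these Laplace applications.

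Next comes the passage from the unnormalized to the self-normalized estimator, via $\pi^{(A)}_{n,\alpha}(\bar\varphi)=\tilde\pi^{(A)}_{n,\alpha}(\bar\varphi)/\tilde\pi^{(A)}_{n,\alpha}(1)$. On the event $E=\{\tilde\pi^{(A)}_{n,\alpha}(1)\ge 1/3\}$ we write
\[
\pi^{(A)}_{n,\alpha}(\bar\varphi)-\pi_\alpha(\bar\varphi)=\frac{\tilde\pi^{(A)}_{n,\alpha}(\bar\varphi)-\pi_\alpha(\bar\varphi)-\pi_\alpha(\bar\varphi)\bigl(\tilde\pi^{(A)}_{n,\alpha}(1)-1\bigr)}{\tilde\pi^{(A)}_{n,\alpha}(1)}.
\]
The denominator then contributes the factor $9$. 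The cross term is of higher order, since $\norm{\pi_\alpha(\bar\varphi)}^2=O(\alpha^{-2})$ by Lemma~\ref{lemma:laplace_approx}, and it is absorbed into $(1+C/\alpha)$.

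On $E^c$ we bound the error crudely, using that $\pi^{(A)}_{n,\alpha}$ is a convex combination and Cauchy--Schwarz with the fourth-moment condition $\sup_{q}\int\norm{\varphi}^4q<\infty$. It then suffices to show that $\bP(E^c)$ is polynomially small in $n$ and $\alpha$. I expect this step to be the main obstacle, because the weights are no longer i.i.d. and Bernstein's inequality cannot be used directly. My first attempt is a Freedman-type lower-tail inequality for the nonnegative martingale $\sum_i(\xi_i^{(1)}-1)$. This exploits the fact that only the lower tail is needed, so nonnegativity of the weights lets us use the conditional second moments alone: $\bE[\exp(-\lambda\xi)\mid\mathcal F_{i-1}]\le\exp\bigl(-\lambda+\lambda^2\bE[\xi^2\mid\mathcal F_{i-1}]/2\bigr)$. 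Combined with the uniform deterministic variance bound of order $m_\alpha\asymp\alpha^{d/2}$ from the previous paragraph, this gives $\bP(E^c)\le\exp(-cn/\alpha^{d/2})$. That quantity is small enough exactly under the stated condition $n\ge c\,\alpha^{d/2}\log(\cdots)$, which is what is needed to make the $E^c$ contribution negligible relative to the main term.
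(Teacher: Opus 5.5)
Your route matches the paper's. The increments $\xi_i-\pi_\alpha(\psi)$ are martingale differences, and their conditional second moments are bounded deterministically via $q_{i-1}\ge g_0$ and the Laplace principle at $2\alpha$ applied to $\|\psi\|^2/g_0$. You then pass through the ratio $\tilde\pi^{(A)}_{n,\alpha}(\bar\varphi)/\tilde\pi^{(A)}_{n,\alpha}(1)$, use a crude fourth-moment bound with Cauchy--Schwarz on the bad event, and obtain a Freedman-type lower tail $\exp(-cn/m_\alpha)$. Your one-sided bound $\bE[\ex^{-\lambda\xi}\mid\mathcal F_{i-1}]\le\exp(-\lambda+\lambda^2 v/2)$ for nonnegative increments makes the paper's ``use Freedman instead of Bernstein'' precise. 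It also avoids the almost-sure bound $U$ that the paper extracts from the tail condition.

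One step fails as written: the constant on $E$. With threshold $1/3$, the denominator already consumes the entire factor $9$. You would therefore need $\bE\|A-B\|^2\le X(1+C/\alpha)$, where
$A=\tilde\pi^{(A)}_{n,\alpha}(\bar\varphi)-\pi_\alpha(\bar\varphi)$,
$B=\pi_\alpha(\bar\varphi)\bigl(\tilde\pi^{(A)}_{n,\alpha}(1)-1\bigr)$, and
$X=n^{-1}m_\alpha 2^{-d/2}C_{f,\|\bar\varphi\|^2/g_0}/(2\alpha)$.

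The estimate $\|\pi_\alpha(\bar\varphi)\|^2=O(\alpha^{-2})$ controls $\bE\|B\|^2$ at relative order $\alpha^{-1}$. It does not control the interaction $-2\bE\langle A,B\rangle$. Cauchy--Schwarz (or Minkowski) only gives $\sqrt{\bE\|A\|^2\,\bE\|B\|^2}=X\cdot O(\alpha^{-1/2})$.

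An exact computation does not rescue this in the adaptive setting. It involves $\int \ex^{-2\alpha(f-f(x^\ast))}\bar\varphi/q_{i-1}$. With only $q_{i-1}\ge g_0$ available, and no uniform smoothness of $1/q$ over $\mathcal Q$, this can be bounded only through $\int \ex^{-2\alpha(f-f(x^\ast))}\|\bar\varphi\|/g_0$. That gives $|\bE\langle A,B\rangle|=O(n^{-1}m_\alpha\alpha^{-3/2})=X\cdot O(\alpha^{-1/2})$. So your argument yields $9X(1+C\alpha^{-1/2})$, not the stated $9X(1+C/\alpha)$.

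The fix is to leave slack, as the paper does. Use the event $\{\tilde\pi^{(A)}_{n,\alpha}(1)\ge 1/2\}$ (factor $4$) and split $\|A-B\|^2\le 2\|A\|^2+2\|B\|^2$, which gives $8X(1+C/\alpha)$. Alternatively, keep Minkowski, since $4X(1+C\alpha^{-1/2})^2\le 8X$ for $\alpha$ large. The remaining margin up to $9$ then absorbs the $E^c$ contribution under the stated condition on $n$.
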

Since this result is obtained without any information on the choice of \(q_n\), it is not surprising that it provides a bound that doesn't improve Theorem~\ref{th:1}. In fact, Theorem~\ref{th:ALISO} implies Theorem~\ref{th:1} by choosing a family $\mathcal Q=\{q_0\}$.

The proof is given in the Appendix and is similar to that of Theorem~\ref{th:1}, which considers the fixed policy. A consequence of this result is that the mean squared error of Adaptive LISO in estimating \(\varphi(x^\ast)\) is bounded as follows. This bound shows that any \(\varphi(x^\ast)\) can be estimated by the adaptive LISO. The proof follows that of Corollary~\ref{cor:LISO}, which is provided immediately above its statement.

\begin{coro}\label{cor:ALISO} 
Under the assumptions of Theorem~\ref{th:ALISO}, with Assumption~\ref{hyp:varphi} additionally required, for every $\alpha,n$ satisfying the conditions of Theorem~\ref{th:ALISO}, we obtain
 \begin{equation*}
    \mathbb E [ \| \pi_{n,\alpha}^{(A)} (\varphi) - \varphi(x^*)  \|^2 ] 
\leq  9 n ^{-1}   C_{ f, \|\tilde {\varphi}\| ^2/ g_0}     \alpha^{-1}   \left( 1+\frac{C}{\alpha}\right) + C_{f,\varphi} \alpha ^{-2}  \,.
        \end{equation*}

\end{coro}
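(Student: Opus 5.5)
The plan is the bias--variance decomposition already used for Corollary~\ref{cor:LISO}. Write $\pi_{n,\alpha}^{(A)}(\varphi)-\varphi(x^\ast) = \bigl(\pi_{n,\alpha}^{(A)}(\varphi)-\pi_\alpha(\varphi)\bigr) + \bigl(\pi_\alpha(\varphi)-\varphi(x^\ast)\bigr)$. The second term is deterministic. Expanding the square, the cross term equals $2\langle \mathbb E[\pi_{n,\alpha}^{(A)}(\varphi)]-\pi_\alpha(\varphi),\, \pi_\alpha(\varphi)-\varphi(x^\ast)\rangle$. To avoid computing this cross term, I will use the elementary inequality $\|a+b\|^2\le 2\|a\|^2+2\|b\|^2$, or, as in the paper, take the displayed decomposition as the working bound; the numerical constant can be absorbed into the factor $9$ and into $C$.

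For the fluctuation term, $\mathbb E\|\pi_{n,\alpha}^{(A)}(\varphi)-\pi_\alpha(\varphi)\|^2$, I will apply Theorem~\ref{th:ALISO} directly. The pair $\alpha,n$ satisfies its hypotheses, and Assumptions~\ref{hyp:f} and~\ref{hyp:Q} are in force. The theorem gives the bound
\[
9\,n^{-1}m_\alpha 2^{-d/2}\,\frac{C_{f,\|\bar\varphi\|^2/g_0}}{2\alpha}\Bigl(1+\frac{C}{\alpha}\Bigr).
\]
This has the form of the first term in the corollary, with $\tilde\varphi=\bar\varphi$, after absorbing the factors $m_\alpha 2^{-d/2}/2$. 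One point needs care. The $\alpha$-dependence $m_\alpha\propto\alpha^{d/2}$ is suppressed in the stated corollary. I read the statement as holding up to the factor $m_\alpha 2^{-d/2}/2$ (equivalently, with $C_{f,\|\tilde\varphi\|^2/g_0}$ understood to include it), so I will simply carry the bound of Theorem~\ref{th:ALISO} over verbatim.

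For the bias term, Assumption~\ref{hyp:varphi} is now available, and the second part of Lemma~\ref{lemma:laplace_approx} gives $\|\pi_\alpha(\varphi)-\varphi(x^\ast)\|\le C_{f,\varphi}/\alpha$ for $\alpha\ge\tilde\alpha$. Squaring yields $C_{f,\varphi}^2\alpha^{-2}$, which is the second term, with the constant written as in Corollary~\ref{cor:LISO}. To make both results apply simultaneously, I take $\tilde\alpha$ to be the larger of the thresholds from Theorem~\ref{th:ALISO} and Lemma~\ref{lemma:laplace_approx}.

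The only genuine subtlety is the cross term. In the adaptive setting the estimator is biased, because the $q_{i-1}$ depend on past samples and the self-normalization introduces further bias. So the cross term does not vanish exactly, and the plain Pythagorean identity cannot be used. The first fix I would try is Young's inequality, which inflates constants by at most a factor $2$. If the sharp constant is wanted instead, I would bound $\|\mathbb E\pi^{(A)}_{n,\alpha}(\varphi)-\pi_\alpha(\varphi)\|$ by the Cauchy--Schwarz square root of the variance bound, which gives a cross term of order $n^{-1/2}\alpha^{d/4-1/2}\cdot\alpha^{-1}$. This is dominated by the sum of the two main terms, by AM--GM, and can be absorbed into the constants.
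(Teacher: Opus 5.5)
You follow the paper's route exactly. The paper proves this corollary by repeating the argument of Corollary~\ref{cor:LISO}: insert $\pi_\alpha(\varphi)$, bound the fluctuation with Theorem~\ref{th:ALISO}, and bound the Laplace bias with Lemma~\ref{lemma:laplace_approx}. Your reading of the statement is the only one under which it follows from Theorem~\ref{th:ALISO}: the factor $m_\alpha 2^{-d/2}/2$ is missing from the first term, and the second term should be $C_{f,\varphi}^2\alpha^{-2}$. You are also right that the cross term needs an argument. The paper writes the split with no cross term at all, which is not justified because the self-normalized estimator is biased. One correction to your diagnosis: adaptivity is not a source of bias. By the martingale property used in the appendix, $\tilde\pi^{(A)}_{n,\alpha}(\varphi)$ is exactly unbiased. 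The bias comes only from self-normalization, so the same issue already affects Corollary~\ref{cor:LISO}.

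The step that fails is your claim that the cost of the cross term can be absorbed. Young's inequality multiplies both terms by $2$. A multiplicative factor cannot be absorbed into the explicit $9$, into $(1+C/\alpha)$, or into $C_{f,\varphi}^2\alpha^{-2}$. Your ``sharp'' variant does not help either. Write $V$ for the bound of Theorem~\ref{th:ALISO} and $B=C_{f,\varphi}/\alpha$. The Cauchy--Schwarz cross term $2\sqrt V\,B$ is twice the geometric mean of the two main terms. At the balanced choice $\alpha\asymp n^{2/(d+2)}$ it is of the same order as each of them, and AM--GM just returns $V+B^2$, i.e.\ the same factor $2$.

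As written, you therefore prove the bound with $18$ and $2C_{f,\varphi}^2$ in place of $9$ and $C_{f,\varphi}^2$. That is the corollary up to absolute constants, which is all the paper's one-line argument really supports too. To get the stated constants you need one more fact: the bias of the ratio estimator is of second order.
\begin{itemize}
\item Set $N=\tilde\pi^{(A)}_{n,\alpha}(\bar\varphi)$, $D=\tilde\pi^{(A)}_{n,\alpha}(1)$ and $\mu=\pi_\alpha(\bar\varphi)$. The martingale structure gives $\mathbb E[N-\mu D]=0$, hence $\mathbb E[N/D]-\mu=-\mathbb E[(N-\mu D)(D-1)/D]$.
\item On $\{D\ge 1/2\}$ this is at most $2\sqrt{\mathbb E\|N-\mu D\|^2\,\mathbb E(D-1)^2}=O(n^{-1}m_\alpha\alpha^{-1/2})$, using bounds already in the appendix: the first term of the proof of Theorem~\ref{th:1} and the proposition with $\varphi=1$.
\item The contribution of $\{D<1/2\}$ is controlled through $\sqrt{\mathbb P(D<1/2)}$. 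It is negligible under the sample-size condition, exactly as in that proof.
\end{itemize}
The cross term is then $O(V\alpha^{-1/2})=o(V)$. For $\alpha$ large it fits into the slack between the leading constant $8$ produced in the proof of Theorem~\ref{th:1} and the stated $9$.
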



Now we consider applying the bound previously obtained to specific families of policies \(\mathcal{Q}\). Our result is valid for two choices:  mixture proposals and bounded policies.

\paragraph{Policies with mixture step}


When $\Theta$ is not compact, the mixture step is necessary for our theory to hold. In this case, as $\theta$ may drift away from $x^\ast$, it is difficult to guarantee that $\exp(-(f - f(x^\ast)))$ is dominated by $q_\theta$. To still maintain control over the variance, we leverage the mixture component, noting that, for all $n \ge 0$,
\(
q_n \ge \lambda q_0
\).
This choice allows us to verify Assumption~\ref{hyp:Q} with \( g_0 = \lambda q_0 \). However, the bound in Corollary~\ref{cor:ALISO} deteriorates as \( \lambda \) becomes too small, which precludes the use of an adaptively decreasing sequence for \( \lambda \). Addressing this limitation is left for future work. An alternative way to circumvent this issue is to restrict attention to bounded policies, as discussed in the next section.


\paragraph{Bounded policies}

A policy is called bounded when \(\Theta\) is a compact set. Each element of \(\Theta\) is used to define the policy \(q_n\). In this case, the update for \(\theta_n\) may be performed by adding a projection step onto the compact set:
\[
\theta_n = \mathrm{Proj}_{\Theta} \bigl(\pi_{n,\alpha}(\mathrm{id})\bigr).
\]
For such policy sampling, the use of the mixture step parameter is not necessary, as indicated by the next lemma.


\begin{lemma}\label{lem:boundede_policy}
    Assume that $ q _\theta (x) = \exp( - \psi_\theta (x )   ) $ with $\psi _\theta:\mathbb R^d \to \mathbb R $ such that $  f(x) - f(x^\ast )  \geq  \psi_{\theta_0} (x )  $, for some $\theta_0 \in \Theta$. Assume also that $\theta \mapsto \psi_\theta (x) $ is $L$-Lipschitz on the compact set $\Theta$, uniformly in $x$. Then, for all $\theta\in\Theta$, we have
    $$ {\exp( -  (f(x) - f(x^*) )) }  \leq c_{\Theta}q_\theta  ,$$ with  $c_{\Theta} = {\sup_{\theta\in\Theta}}\exp (L  \| \theta_0   - \theta \| ) $.
\end{lemma}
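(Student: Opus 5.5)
The plan is a direct pointwise comparison: write $q_\theta$ in terms of $q_{\theta_0}$ using the Lipschitz property of $\theta\mapsto\psi_\theta(x)$, and then use the hypothesis $f - f(x^\ast) \ge \psi_{\theta_0}$ to dominate $\exp(-(f-f(x^\ast)))$ by $q_{\theta_0}$. No earlier result of the paper is needed; the argument is elementary and holds for every $x\in\mathbb R^d$.

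\textbf{Step 1: domination by the reference density.} Fix $x\in\mathbb R^d$. Since $f(x)-f(x^\ast)\ge \psi_{\theta_0}(x)$ and $t\mapsto e^{-t}$ is decreasing,
\[
\exp\bigl(-(f(x)-f(x^\ast))\bigr)\le \exp\bigl(-\psi_{\theta_0}(x)\bigr)=q_{\theta_0}(x).
\]

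\textbf{Step 2: transfer to an arbitrary $\theta$.} For any $\theta\in\Theta$, the uniform Lipschitz property gives $\psi_\theta(x)-\psi_{\theta_0}(x)\le L\|\theta-\theta_0\|$, with a constant independent of $x$. Hence
\[
q_{\theta_0}(x)=\exp\bigl(-\psi_\theta(x)\bigr)\exp\bigl(\psi_\theta(x)-\psi_{\theta_0}(x)\bigr)\le \exp\bigl(L\|\theta_0-\theta\|\bigr)\,q_\theta(x).
\]

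\textbf{Step 3: conclusion.} Combining Steps 1 and 2 and taking the supremum over $\theta\in\Theta$ of the factor $\exp(L\|\theta_0-\theta\|)$ yields
\[
\exp\bigl(-(f(x)-f(x^\ast))\bigr)\le c_\Theta\, q_\theta(x), \qquad c_\Theta=\sup_{\theta\in\Theta}\exp\bigl(L\|\theta_0-\theta\|\bigr).
\]
This constant is finite: $\Theta$ is compact, so $\theta\mapsto\|\theta_0-\theta\|$ is bounded on $\Theta$ (by its diameter, since $\theta_0\in\Theta$).

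\textbf{Main obstacle.} There is essentially none. The only point requiring care is that the Lipschitz bound must be uniform in $x$; otherwise $c_\Theta$ could depend on $x$ and would not give the lower envelope required in Assumption~\ref{hyp:Q}. Taking $g_0 = q_{\theta_0}/\exp\bigl(L\,\mathrm{diam}(\Theta)\bigr)$ then shows how the lemma feeds into that assumption.
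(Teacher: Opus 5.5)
Your proof is correct and follows the paper's argument essentially verbatim: first dominate $\exp(-(f-f(x^\ast)))$ by $q_{\theta_0}=\exp(-\psi_{\theta_0})$, then use the uniform Lipschitz bound $\psi_\theta-\psi_{\theta_0}\le L\|\theta-\theta_0\|$ to pass from $q_{\theta_0}$ to $q_\theta$. Your extra remarks are also correct and slightly more explicit than the paper: $c_\Theta$ is finite by compactness of $\Theta$, and $g_0=q_{\theta_0}\exp(-L\,\mathrm{diam}(\Theta))$ gives a valid lower envelope for Assumption~\ref{hyp:Q}.
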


One relevant consequence for our problem is that the quantity 
\(\exp(- (f - f(x^\ast))) / q_{\theta}\) 
is bounded, so the tail condition in Assumption~\ref{hyp:Q} is automatically satisfied. One then only needs to require an integrability condition on \(\exp(- f) \, \|\varphi(x)\|\), which is clearly satisfied when, for instance, \(f\) grows sufficiently fast to infinity.


\section{Numerical experiments}
In this section, we compare our methods to some standard sampling-based algorithms on global optimization benchmarks. Additional experiments are in presented Section~\ref{sec:AdditionalPlot} of the appendix.

\begin{figure*}[htbp]
    \centering
    \begin{minipage}{0.3\linewidth}
        \centering
        \includegraphics[width=\linewidth]{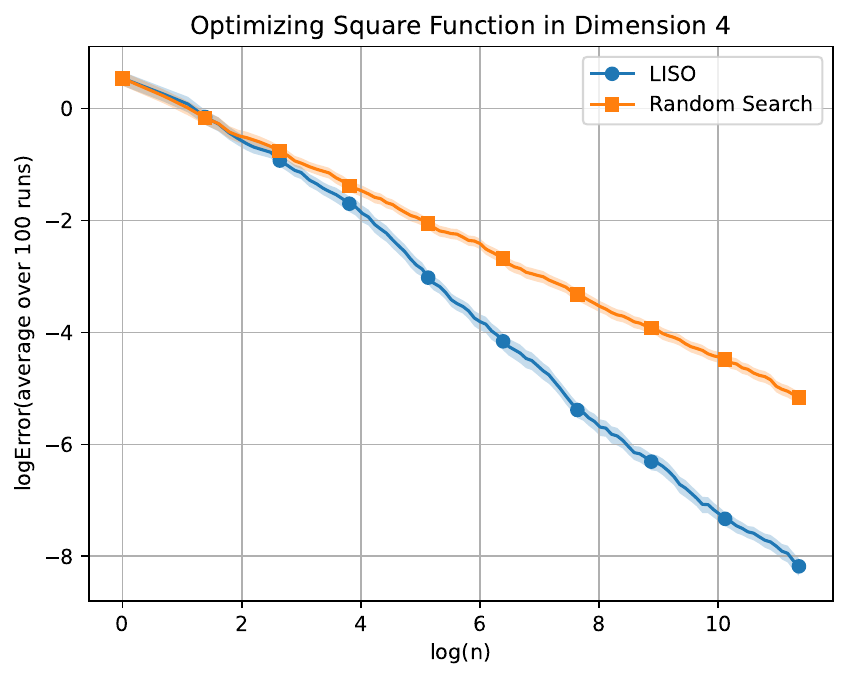}
    \end{minipage}
    \begin{minipage}{0.3\linewidth}
        \centering
        \includegraphics[width=\linewidth]{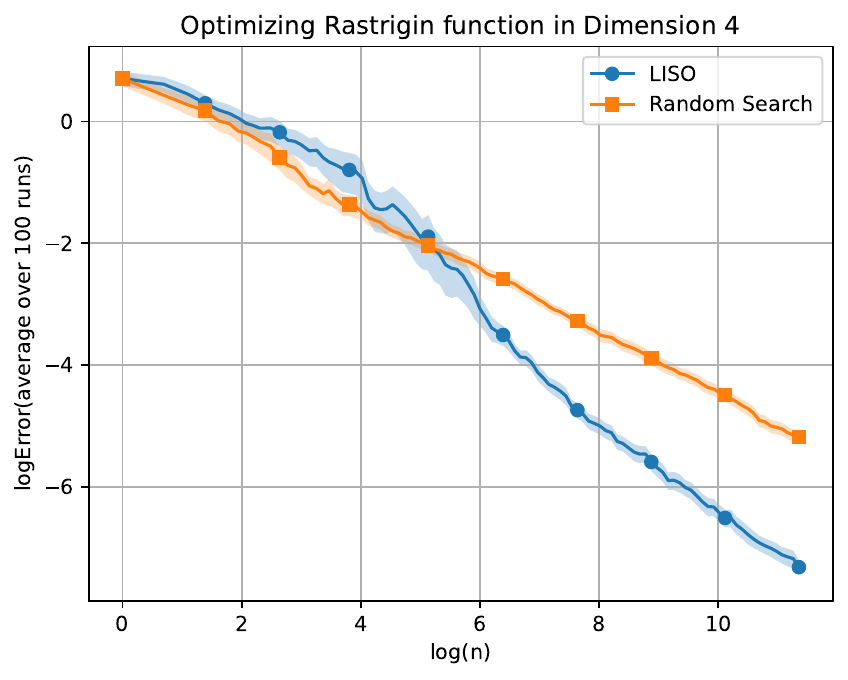}

    \end{minipage}
    \begin{minipage}{0.3\linewidth}
        \centering
        \includegraphics[width=\linewidth]{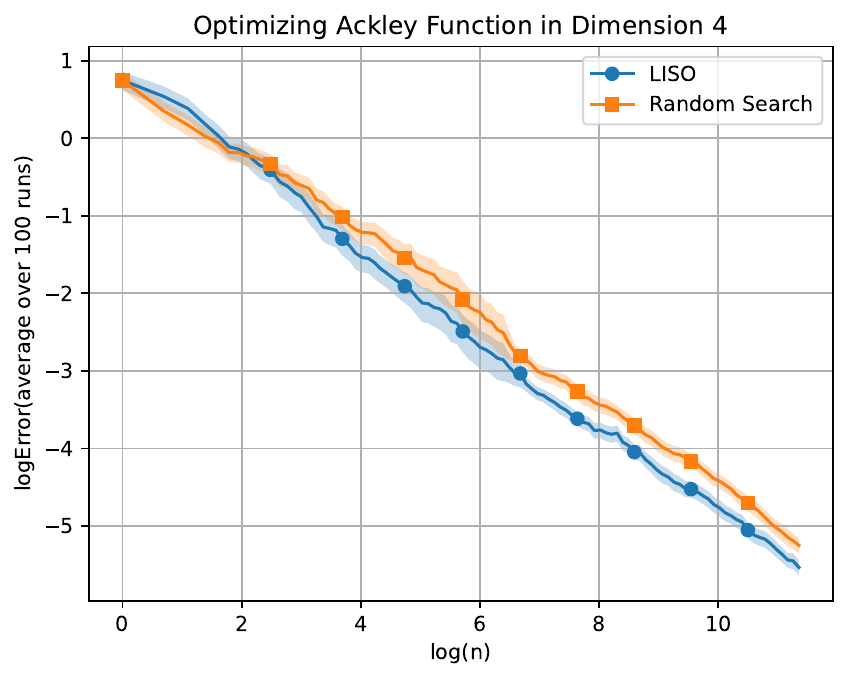}
    \end{minipage}
    \begin{minipage}{0.3\linewidth}
        \centering
        \includegraphics[width=\linewidth]{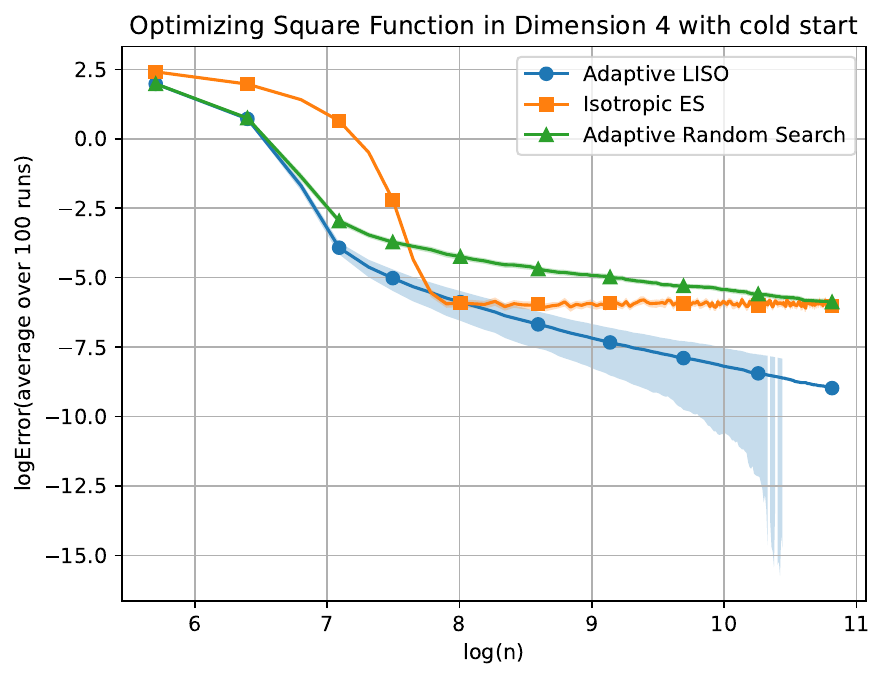}
    \end{minipage}
    \begin{minipage}{0.3\linewidth}
        \centering
        \includegraphics[width=\linewidth]{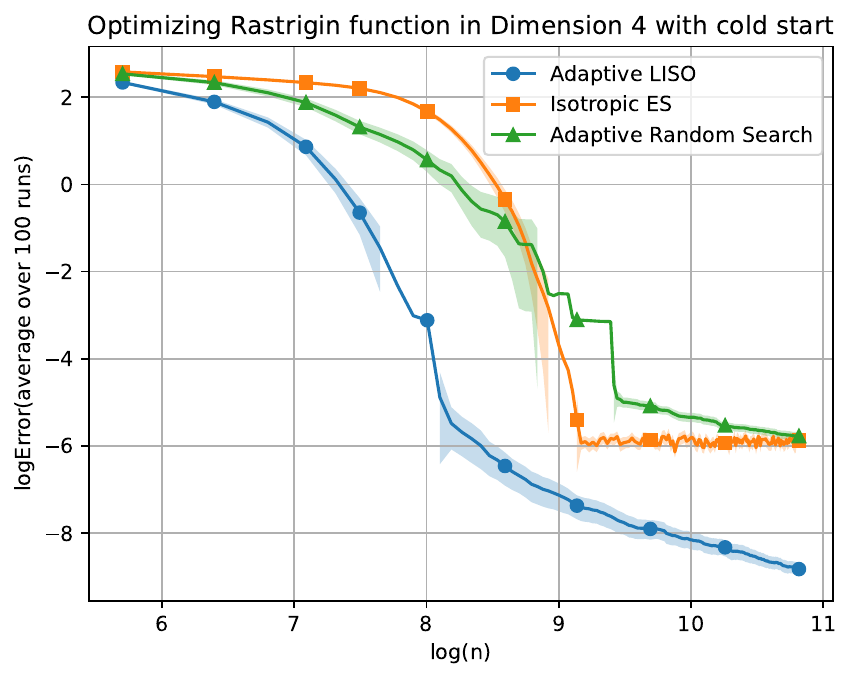}
    \end{minipage}
    \begin{minipage}{0.3\linewidth}
        \centering
        \includegraphics[width=\linewidth]{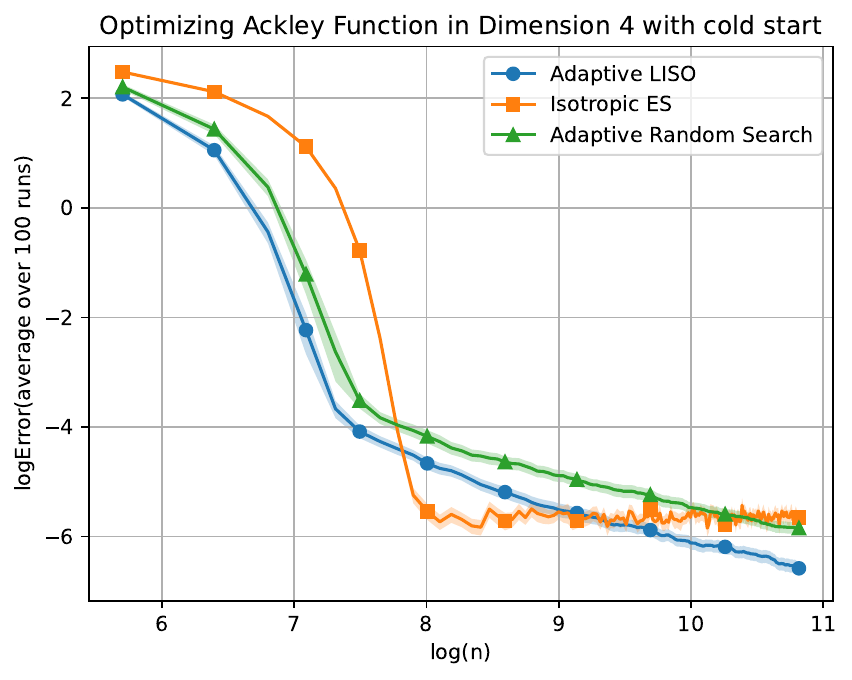}
    \end{minipage}
\caption{The plots on the top refer to static non-adaptive methods, where the initial distribution is close to $x^\ast$ and the toy functions $f_1, f_2, f_3$ are tested in order. The plots on the bottom refer to adaptive methods, where the initial distribution $q_0$ is far from $x^\ast$. The dimension used is $d=4$, and we refer to the other paragraphs in this section for additional information on the experimental parameters. Moreover, Section~\ref{sec:AdditionalPlot} provides the plots at a larger scale, with additional choices of the dimension $d$.  }
\label{fig:result}
\end{figure*}

In the context of zero-order optimization, evaluating the function $f$ often dominates the overall computational cost.
Therefore, a natural way to compare algorithms is to assess their performance in terms of the number of evaluations of $f$.
Since our methods are simple and generic, we do not claim that they outperform sophisticated state-of-the-art algorithms. Our goal is to demonstrate that this inexpensive averaging schemes compares favorably to the strategy taking the global best point, or other weighting techniques, in a wide range of situations.
Since state-of-the-art algorithms, such as CMA-ES, often use some form of averaging as an intermediate step, we wish to demonstrate that our scheme could be used in replacement.

\paragraph{Competitors.} We list the algorithms that we compare. First, consider the two following non-adaptive schemes.
\begin{itemize}
    \item \textbf{LISO}: we consider Algorithm \ref{algo:IS} with a Gaussian sampling distribution $q_0 = \mathcal{N}(\mu, \sigma^2 I_d)$ and the parameter $\alpha \propto n^{\frac{2}{d+2}}$ suggested by theory,
    \item \textbf{Plain random search:} as in Equation \eqref{eq:rs_intro} we sample $n$ point with the same distribution as LISO, and select the best-performing one.
\end{itemize}
Regarding methods with adaptive sampling policies, we compare the three following schemes.
\begin{itemize}
    \item \textbf{Adaptive LISO:} we implement Algorithm \ref{algo:AIS} with a Gaussian sampling policy,
    \item \textbf{Adaptive random search:} at each stage, we sample a new point from a Gaussian distribution centered at the best sample from the previous iterations,
    \item \textbf{Isotropic ES:} this is a simplified variant of the CMA-ES method, in which the covariance matrix is kept fixed throughout the iterations.
We refer to this variant as the \emph{isotropic evolutionary strategy} and the algorithm is described in \cite[Appendix A]{hansen2006cma}. It uses rank-based weighting to select the mean of the next sampling distribution (see details below).
We choose this variant in order to compare the two averaging strategies, all things being equal.
\end{itemize}

Consequently, if Algorithm~\ref{algo:AIS} outperforms rank-based averaging, this suggests that integrating our scheme in CMA-ES may result in improved performances.

\paragraph{Implementation details.}

We run both LISO (Algorithm~\ref{algo:IS}) and plain random search \eqref{eq:rs_intro} with a distribution $q_0=\mathcal N(x^\ast+ d^{-1/2}1_d, d^{-1} I_d)$. We choose the temperature parameter as $\alpha_n =\alpha_0n^{\frac 2{d+2}}$, which is the theoretically-optimal choice (Section \ref{ss:informal}), where the value of $\alpha_0$ will be specified for every examples considered.

For adaptive LISO, adaptive random search, and the isotropic evolutionary strategy, we consider a mini-batch variant.
This choice allows parallelization over the batch size and therefore reduces the computational cost, at the expense of a lower convergence rate. 
We choose a mini-batch size $B=300$.
At iteration $k+1$, the mini-batch version of adaptive LISO samples $B$ i.i.d.\ particles $(X^{k+1,(j)})_{j\in[B]}$ according to
\(
q_k = \mathcal{N}\!\left(\mu_k^{\mathrm{(A)}}, \tfrac{1}{d} I_d\right)
\) (note that the mixture weight $\lambda$ is set to $0$)
The mean is updated as
\(
\mu_k^{\mathrm{(A)}} =
\frac{\sum_{i\in[k]}\sum_{j\in[B]} w_k^{i,(j)} X^{i,(j)}}
{\sum_{i\in[k]}\sum_{j\in[B]} w_k^{i,(j)}},
\)
where the weights are defined by
\(
w_k^{i,(j)} =
\frac{\exp\!\left(-\alpha_{kB} f(X^{i,(j)})\right)}{q_{k-1}(X^{i,(j)})}.
\)
At the end of the algorithm, we return $\mu_n^{\mathrm{(A)}}$.
The mini-batch version of adaptive random search is defined analogously to adaptive LISO, except that $\mu_k^{\mathrm{(A)}}$ is replaced by
\(
\mu_k^{\mathrm{RS}}
:= \arg\min \bigl\{ f(x) \,:\, x \in \{X^{1,(1)}, \dots, X^{k,(B)}\} \bigr\}.
\)
Finally, the mini-batch isotropic evolutionary strategy replaces $\mu_k^{\mathrm{(A)}}$ with
\(
\mu_k^{\mathrm{IES}} := \sum_{i=1}^{\lambda} a^i \tilde X^{k,(i)},
\)
where $(\tilde X^{k,(i)})_{i\in[B]}$ denotes the sequence of sampled particles sorted in increasing order of objective value,
\(
f(\tilde X^{k,(1)}) \le \dots \le f(\tilde X^{k,(B)}),
\)
$\lambda = \lfloor B/2 \rfloor$, and the recombination weights are given by
\(
a^i = \log\!\left(\tfrac{B+1}{2}\right) - \log(i).
\)
This procedure follows the one described in \cite{hansen2006cma}, Table~1, where the covariance update is ignored.
Finally all the algorithm are initialized, with with a distribution $q_0 =\mathcal N(x^\ast + \frac 4{\sqrt{d}}1_d, \frac 1dI_d) $.
This distribution is farther from $x^\ast$ than in the static case, as we wish to demonstrate that the adaptive strategy accomplishes its purpose, namely to adapt from a poor initialization.

\paragraph{Experimental setup.}
For benchmarking our method with respect to its competitors, we consider three canonical test functions: 
the Sphere function ($f_1$), the Rastrigin function ($f_2$), and the Ackley function ($f_3$).
The Sphere function
$
f_1(x) = \norm{x}^2,
$
is a simple convex quadratic function with a unique global minimum at the origin, serving as a baseline to evaluate basic convergence speed in 
well-conditioned scenarios. The Rastrigin function,
\[
f_2(x) = 10d + \sum_{i=1}^d \left[4x_i^2 - 10\cos(\pi x_i)\right],
\]
is a non-convex multimodal function with many local minima, challenging the optimizer's global exploration capabilities.  The Ackley function,
\begin{equation*}
    f_3({x}) =
- 20 \exp\!\left(- 0.2 \sqrt{\frac{1}{d}\sum_{i=1}^{d} x_i^2}\right)
- \exp\!\left(\frac{1}{d}\sum_{i=1}^{d} \cos( 2\pi x_i)\right)
+ 20 + \exp(1),
\end{equation*}

is again a non-convex, multimodal function with many local minima, but this time it is not separable and possesses a narrow valley, i.e., it is ill-conditionned near $x^*$. 

For the functions proposed above, there exists a unique minimizer $x^\ast = 0$, which is an important property for guaranteeing the efficiency of our algorithms; see Assumption~\ref{hyp:f}.


For each method, we report the empirical average of $\lVert x_n - x^\ast \rVert^2$ over 100 runs, where $x_n$ denotes the estimator of $x^\ast$ obtained after $n$ evaluations of $f$.
The results are then plotted on a logarithmic scale with a 95\% confidence interval in order to compare the asymptotic performance of the algorithms.
Moreover, in the case of $f_1$, $\alpha_0 = 1$; in the case of $f_2$, $\alpha_0 = d/80$; and in the case of $f_3$, $\alpha_0 = d/4$.
Simulations are performed for dimensions $d \in \{2,4,8,12\}$.
In this section, we present the results for dimension $d = 4$ and refer to Section~\ref{sec:AdditionalPlot} for additional experiments.

\paragraph{Comments on the numerical experiments.}

The results shown in Figure~\ref{fig:result} show that our method enjoys good performance in both static and adaptive cases.  

It is unsurprising that our algorithm achieves its best performance on the square function $f_1$. Indeed, in this case, the Laplace principle is exact, allowing us to obtain the best theoretical bound (see Theorem~\ref{th:1}). Moreover, experiments on the square function highlight the asymptotic rate predicted by the informal Theorem~\ref{th:informal}, $\mathcal{O}(n^{-4/(d+2)})$, which outperforms the rate of random search, $\mathcal{O}(n^{-2/d})$.  

It is also expected that, compared to the Ackley function $f_3$, the Rastrigin function $f_2$ yields better performance for our algorithms relative to other methods, as it behaves more closely like a quadratic function around $x^\ast$.

The experiments on the adaptive algorithms reveal three phases on adaptive LISO~\ref{algo:AIS}. The first phase is where the distance between $x_n$ and $x^\ast$ decreases slowly while the algorithm searches for the mode around $x^\ast$. In the second phase, the mode is found and the algorithm approaches $x^\ast$ quickly. In the final phase, $x^\ast$ has been located, and the algorithm recovers the asymptotic rates of the static case.  

Finally, we note that the mean update strategy in CMA-ES (isotropic ES) is faster during the second phase, where the mode is found. However, the algorithm eventually ceases to progress. This is because the output of isotropic ES is generated using a fixed number of particles, whereas the outputs of adaptive LISO and adaptive random search are based on an evolving number of particles across iterations.

\section{Discussion}

In this work, we presented a simple averaging strategy that can be easily integrated in sampling-based global optimization methods. We showed that this approach enjoys favorable theoretical guarantees in terms of asymptotic dependence on $n$ and $d$, and demonstrates good practical performance on benchmark problems.

An important limitation is that we assume that the global minimizer $x^*$ is unique, and that the objective function is locally strongly convex in a neighborhood of $x^*$. Without this assumption, both the theory and practice may fail, as the method crucially relies on convex combinations. An important question for future work is to handle functions which are not locally strongly convex but still enjoy a weaker form of strong convexity when restricted to some directions, such as Polyak-Łojasiewicz functions \cite{PL2016}.

Another promising direction is to integrate this scheme into more complex adaptive methods, which also adjust the covariance matrix of the sampling distribution (in the spirit of CMA-ES) in order to better handle ill-conditionned landscapes, and to derive sharp convergence guarantees.

\bibliographystyle{alpha}
\bibliography{bib}

\newpage
\appendix

\section{Proof of Theorem~\ref{th:1}}
\label{sec:proofTh1}

Without loss of generality, we can assume $f(x^\ast) = 0$. The proof is in two steps. First we prove Proposition \ref{prop:unnormalized}. Then we consider proving the result of interest.

\subsection{Proof of Proposition~\ref{prop:unnormalized}}
\label{sec:ProofProp}
 Let us begin by applying Lemma~\ref{lemma:laplace_approx} to obtain two auxiliary bounds. 
We note that by Assumption~\ref{hyp:f}, Lemma~\ref{lemma:laplace_approx} holds with $\varphi=1$.
There exist $C$ depending on $f$ only such that for all $\alpha > 1$,
\[
\abs{Z_{\alpha} - m_\alpha^{-1}} \le m_\alpha^{-1}\frac {C}\alpha \quad \text{and} \quad \abs{Z_{2\alpha} - 2^{-d/2}m_\alpha^{-1}} \le  2^{-d/2}m_\alpha^{-1}\frac {C}{2\alpha}\,.
\]
and more particularly 
\[
 m_\alpha^{-1} \left(1 - \frac {C}\alpha \right) \le Z_{\alpha}  \quad \text{and} \quad Z_{2\alpha} \le  2^{-d/2}m_\alpha^{-1} \left(1+  \frac {C}{2\alpha}\right)\,.
\]
We take $\tilde \alpha_1$ large enough such that for every $\alpha>\tilde \alpha_1$, we have $C/\alpha\le 1 / 2  $.  When $x\in[0,  1 / 2]$, we have 
$$(1-x)^{-2}(1+x/2) \leq (1+6 x) (1+x/2) \leq (1+8x) \,.$$
Then, we obtain: 
\begin{align}\label{eq1}
    \frac{Z_{2\alpha}}{Z_{\alpha}^2}  
    \le  2^{-d/2}  m_{\alpha} \left( 1+ 8 \frac C \alpha \right).
\end{align} 
By Assumption~\ref{hyp:q0}, the assumptions of Lemma~\ref{lemma:laplace_approx} hold with $\norm{\varphi}^2/q_0$.
Then, from Lemma \ref{lemma:laplace_approx}, there exists $\tilde\alpha_2$ depending on $f,\varphi$ such that it holds, for any $\alpha \geq \tilde \alpha_2$,
\begin{align}\label{eq2}
    \int  {\frac{ \norm{\varphi(x)} ^2}{q_0(x)}}\pi_{2\alpha}(\dr x)  \le \frac{\norm{\varphi(x^\ast)}^2}{q_0(x^\ast)} + \frac{C_{f,\norm{\varphi} ^2 /q_0 } }{2 \alpha}\,.
\end{align}
Let us work under the condition that
\(
\alpha \ge  \tilde{\alpha}
:= \tilde{\alpha}_1 \vee \tilde{\alpha}_2 \vee 1
\).
Now observe that, by definition of $\tilde{\pi}_{n,\alpha}$,

\[
\tilde \pi_{n,\alpha} = \frac 1n\sum_{i\in[n]}  \bar w^i\delta_{X^i}\,,
\]
with $\bar w^i = {\pi_\alpha(X^i)} / {q_0(X^i)}$. Then, using that the random variables $(X^i)_{i\in[n]}$ are i.i.d. and $\bE (\bar w^i\varphi(X^i) )=\pi_\alpha(\varphi) $:
\begin{align*}
    \bE\norm{\tilde \pi_{n,\alpha}(\varphi) - \pi_{\alpha}(\varphi)}^2 &= \bE \norm{\frac 1n\sum_{i\in [n]} (\bar w^i\varphi(X^i) - \pi_\alpha(\varphi))}^2\\
    &=\frac 1n\bE \norm{\bar w^1\varphi(X^1) -\pi_\alpha(\varphi)}^2\\
    &\le \frac 1 n   \bE\norm{\bar w^1\varphi(X^1)}^2  \,.
\end{align*}
We complete the proof of the first statement by showing that
\begin{align}\label{eq:varbarw}
    \bE\norm{\bar w^1\varphi(X^1)}^2 \leq  2^{-d/2}  m_{\alpha}  \left( \frac{\norm{\varphi(x^\ast)}^2}{q_0(x^\ast)} + \frac{C_{f,\varphi ^2 /q_0 } }{2 \alpha} \right)( 1+ \frac {C_1}\alpha  ) \,,
\end{align}
for $C_1$ that depends on $f$ and $\varphi$ only. 
Since $\pi_\alpha^2 = \pi_{2\alpha} {Z_{2\alpha}} /{Z_\alpha^2}$, it follows that 
$$  \bE\norm{\bar w^1\varphi(X^1)}^2 
 =  \int {\frac{ \norm{\varphi(x)} ^2}{q_0(x)}}\pi_{\alpha}^2(x)\dr x = \frac{Z_{2\alpha}}{Z_{\alpha}^2}\int  {\frac{ \norm{\varphi(x)} ^2}{q_0(x)}}\pi_{2\alpha}(\dr x)\,, $$
and by combining \eqref{eq1} and \eqref{eq2}, we obtain
\begin{align*}
 \bE\norm{\bar w^1\varphi(X^1)}^2 
&\le\frac 1n \frac{Z_{2\alpha}}{Z_{\alpha}^2}\int  {\frac{ \norm{\varphi(x)} ^2}{q_0(x)}}\pi_{2\alpha}(\dr x) \\
&\le \frac 1n  2^{-d/2}  m_{\alpha} ( 1+ 8 C\alpha^{-1} ) \int  {\frac{ \norm{\varphi(x)} ^2}{q_0(x)}}\pi_{2\alpha}(\dr x) \\
&\le \frac 1n  2^{-d/2}  m_{\alpha} ( 1+ 8 C\alpha^{-1}  )  \left( \frac{\norm{\varphi(x^\ast)}^2}{q_0(x^\ast)} + \frac{C_{f,\norm{\varphi} ^2 /q_0 } }{2 \alpha} \right)\,,
\end{align*}
We obtain \eqref{eq:varbarw}, taking $C_1=8C$ and the proof is finished.

\subsection{End of the proof}\label{subsec:Proofth}

We introduce a sequence of positive constants $C_1, \dots, C_6$ and $\tilde{\alpha}_1, \tilde{\alpha}_2$, which are not made explicit. However, we tried to keep track of the dependence between each of them.

We define $\bar \varphi = \varphi -\varphi(x^\ast)$ and we remark that
    $$\pi_{n,\alpha}(\varphi) - \pi_{\alpha}(\varphi) = \pi_{n,\alpha}(\bar \varphi) -\pi_{\alpha}(\bar \varphi)  = \frac{\tilde \pi_{n,\alpha}(\varphi) -\tilde \pi_{n,\alpha}(1)\pi_{\alpha}(\bar\varphi)}{\tilde \pi_{n,\alpha}(1)}\,,  $$
   and defining the event $A:= \{\tilde \pi_{n,\alpha}(1) \geq 1/2 \}$, we obtain
    \begin{align*}
        \norm{\pi_{n,\alpha}(\varphi) - \pi_{\alpha}(\varphi)}^2&= \norm{\pi_{n,\alpha}(\varphi) - \pi_{\alpha}(\varphi)}^2\un_{ A} +    \norm{\pi_{n,\alpha}(\varphi) - \pi_{\alpha}(\varphi)}^2 \un_{ A^c}\\
        &\leq  4\norm{\tilde \pi_{n,\alpha}(\bar \varphi)  - \pi_{\alpha}(\bar \varphi) \tilde\pi_{\alpha,n}(1) }^2 \un_{ A} + \norm{\pi_{n,\alpha}(\varphi) - \pi_{\alpha}(\varphi)}^2 \un_{ A^c}.
    \end{align*}  
We take the expectation and use the Cauchy-Schwartz inequality, to get
\begin{equation}\label{eq:term-to-bound-th3}
    \mathbb E \norm{\pi_{n,\alpha}(\varphi) - \pi_{\alpha}(\varphi)}^2\leq 4 \mathbb E  \norm{\tilde \pi_{n,\alpha}(\bar \varphi)  - \pi_{\alpha}(\bar \varphi) \tilde\pi_{\alpha,n}(1) }^2 + \sqrt{\bE \norm{\pi_{n,\alpha}(\varphi) - \pi_{\alpha}(\varphi)}^4} \sqrt{\bP(A^c)}.
\end{equation}
We then have three terms to study. For the first term, we get
\[
    \mathbb E  \norm{\tilde \pi_{n,\alpha}(\bar \varphi ) - \pi_{\alpha}(\bar \varphi) \tilde\pi_{n,\alpha}(1) }^2
        \le 2\mathbb E  \norm{\tilde \pi_{n,\alpha}(\bar \varphi ) - \pi_{\alpha}(\bar \varphi)}^2 + 2\norm{\pi_{\alpha}(\bar \varphi)}^2\mathbb E  \norm{\tilde\pi_{n,\alpha}(1)-\pi_{\alpha}(1)}^2
\]
The two terms above are treated by relying on Proposition~\ref{prop:unnormalized}, with the test functions $\bar{\varphi}$ and $1$, respectively, as they both satisfy the assumptions of Proposition~\ref{prop:unnormalized}. Since $\bar{\varphi}(x^*) = 0$, the function $\bar{\varphi}$ enjoys an improved convergence rate compared to the standard case $\varphi(x^*) \neq 0$. More concretely, there exist constants $\tilde{\alpha}_1 > 1$, depending on $f$, $\varphi$, and $q_0$, and $C_1$, depending on $f$ and $q_0$, such that for all $\alpha \ge \tilde{\alpha}_1 $,
\[
\mathbb{E}\!\left[\bigl\|\tilde \pi_{n,\alpha}(\bar \varphi)-\pi_{\alpha}(\bar \varphi)  \bigr\|^2\right]
\le n^{-1} 2^{-d/2}  m_{\alpha}\frac{C_{f,\norm{\bar \varphi} ^2 /q_0 } }{2 \alpha}  \left( 1+   \frac {C_1} \alpha   \right)  \,,
\]
and 
\[
\bE \left[\norm{\tilde \pi_{n,\alpha}(1) - \pi_{\alpha}(1)}^2\right]
\le n^{-1} 2^{-d/2}  m_{\alpha}\left( \frac{1}{q_0(x^\ast)} + \frac{C_{f,1 /q_0 } }{2 \alpha} \right)  \left( 1+   \frac {C_1} \alpha   \right) 
\,.
\]
Moreover by Lemma~\ref{lemma:laplace_approx}, we have
\[
\norm{\pi_{\alpha}(\bar \varphi)}^2 \le  \frac {C_{f,\bar \varphi}^2}{\alpha^2}\,.
\]
It follows that there exists a constant $C_2>0$ depending on $f,\varphi,q_0$ such that
\begin{equation}
\label{eq:first-term-th3}
\begin{split}
     \mathbb E  \norm{\tilde \pi_{n,\alpha}(\bar \varphi)  - \pi_{\alpha}(\bar \varphi) \tilde\pi_{\alpha,n}(1)  }^2 &\le  2 
     n^{-1} 2^{-d/2}  m_{\alpha}   \left(\frac{C_{f,\bar \varphi ^2 /q_0 } }{2 \alpha} + \frac {C_{f,\bar \varphi}^2}{\alpha^2}\left( \frac{1}{q_0(x^\ast)} + \frac{C_{f,1 /q_0 } }{2 \alpha} \right)\right)
    \left( 1+   \frac {C_1} \alpha   \right) \\
     & \le  2 
     n^{-1} 2^{-d/2}  m_{\alpha}  \left(\frac{C_{f,\bar \varphi ^2 /q_0 } }{ \alpha} \right)\left(1+\frac { {C_2}}\alpha\right)\,.
\end{split}
\end{equation}

We now proceed to bound the second term in  Equation~\eqref{eq:term-to-bound-th3}:
\[
\bE \norm{\pi_{n,\alpha}(\varphi) - \pi_{\alpha}(\varphi)}^4 \le 8\bE\norm{\pi_{n,\alpha}(\varphi)}^4 + 8\norm{\pi_{\alpha}(\varphi)}^4\,.
\]
Then, using that the random variables $(X^i)_{i\in[n]}$ are i.i.d., we obtain:
\[
\begin{split}
    \bE\norm{\pi_{n,\alpha}(\varphi)}^4 &\le \bE\p{\frac {\sum_{i\in[n]}w^i\norm{\varphi(X^i)}^4}{\sum_{i\in[n]} w^i}}\\
    &\le \bE \p{\max_{i\in[n]} \norm{\varphi(X^i)}^4}\\
    &\le n\bE \norm{\varphi(X^1)}^4\,,
\end{split}
\]
and 
$$\norm{\pi_{\alpha}(\varphi)}^4\leq \pi_{\alpha}(\norm{ \varphi}^4)  =  Z_\alpha ^{-1}  \int \exp(-\alpha (f(x)-f(x^\ast) ) ) \| \varphi(x) \|^4\,\dr x  $$
we obtain with 
\[
\norm{\pi_{\alpha}(\varphi)}^4 \le Z_\alpha ^{-1}   \int \exp(- (f(x)-f(x^\ast) ) ) \| \varphi(x) \|^4\,\dr x\,,
\]
then we obtain with Assumption~\ref{hyp:varphi}, the existence of a constant $C_3>0$ depending on $\varphi,f$ such that
\[
\norm{\pi_{\alpha}(\varphi)}^4 \le C_3 m_\alpha\,.
\]
Combining the two computations, we obtain:
\begin{equation}
    \label{eq:second-term-th3}
   \sqrt{ \bE \norm{\pi_{n,\alpha}(\varphi) - \pi_{\alpha}(\varphi)}^4 }\le \sqrt{8(n\bE \norm{\varphi(X^1)}^4 + C_3m_\alpha)}\,.
\end{equation}

We bound the third term of Equation~\eqref{eq:term-to-bound-th3} by Bernstein inequality. Indeed, we have:
\[
\bP(A^c) = \bP \left( \pi_{n,\alpha}(1) \le \frac 12 \right) = \bP\left( \frac 1n \sum \bar w^i \le \frac 12\right) \,,
\]
with the notation $\bar w^i = \pi_{\alpha}(X^i)/q_0(X^i)$.
Now, we state Bernstein inequality in our context.
\begin{lemma}\label{lem:Bernstein}
    Let $(\bar w^1,\ldots, \bar w^n)$ be independent and identically distributed random variables such that $ 0 \leq \bar w^1 \leq U$ a.s.,  $\mathbb E (\bar w^1) = 1$ and  $\mathbb E (\bar w^i)^2 = v$. We have:
$$ \mathbb P \left(  n^{-1} \sum_{i\in[n]}  \bar w^i < 1/2 \right)  \leq \exp \left(- \frac{ n }{ 8v + 4U/3  } \right) .   $$
\end{lemma}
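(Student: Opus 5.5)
We apply Bernstein's inequality to the centered variables $Y^i := 1 - \bar w^i$. We reduce to the lower tail of the sum because the $\bar w^i$ are only bounded above by $U$ and below by $0$; the latter bound is what matters for the lower deviation. Since $\bar w^i \geq 0$, we have $Y^i \le 1$ almost surely. Moreover $\mathbb E Y^i = 0$ and $\mathbb E (Y^i)^2 = v - 1 \le v$. The event $\{n^{-1}\sum_i \bar w^i < 1/2\}$ coincides with $\{\sum_i Y^i > n/2\}$.

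The one-sided Bernstein inequality applies to independent, mean-zero variables that are bounded above by $b$, with variance sum $\sigma^2$. It states
\[
\mathbb P\Big(\sum_{i\in[n]} Y^i > t\Big) \le \exp\!\Big(-\frac{t^2/2}{\sigma^2 + bt/3}\Big).
\]
This version requires only an upper bound on the summands, since the Chernoff argument uses $e^{\lambda y}\le 1+\lambda y + \frac{\lambda^2 y^2}{2}\,g(\lambda b)$ for $y\le b$. We use it with $t=n/2$, $\sigma^2 \le nv$ and $b=1$, which gives
\[
\exp\!\Big(-\frac{n^2/8}{nv + n/6}\Big) = \exp\!\Big(-\frac{n}{8v + 4/3}\Big).
\]

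It then remains to compare this with the stated bound. Because $\mathbb E \bar w^1 = 1$ and $0\le \bar w^1\le U$, we have $U \ge 1$, so $8v + 4/3 \le 8v + 4U/3$. The exponential is monotone, so the stated inequality follows.

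If one prefers to use the textbook two-sided-bounded Bernstein inequality instead, the bound $|Y^i| \le \max(1, U-1) \le U$ gives the same form with $b = U$ directly. This variant avoids the one-sided Chernoff lemma altogether.

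The only delicate point is choosing the right almost-sure bound $b$ for the centered variables and checking the constants $t^2/2 = n^2/8$ and $bt/3 = Un/6$. Multiplying the numerator and denominator by $8/n$ yields exactly $8v + 4U/3$, so there is no real obstacle.
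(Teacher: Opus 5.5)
Your proof is correct and is essentially the paper's argument: Bernstein's inequality applied to the centered variables $1-\bar w^i$ with $t=n/2$. The paper uses the two-sided version with $|\bar w^1-1|\le U\vee 1=U$, which is exactly your alternative, and takes $\mathbb E(\bar w^1)^2=v$ as the variance proxy. Your primary one-sided variant with $b=1$ is a valid minor refinement: it gives the sharper bound $\exp(-n/(8v+4/3))$, which you then relax using $U\ge 1$.
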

\begin{proof}
Note that by assumption $ - (\bar w^1 - 1)$ is a centered random variable such that $ -1 \leq \bar w^1 - 1 \leq U -1\leq U $ which implies that $|\bar w^1-1| \leq U\vee 1 = U $ (because $U\geq 1$ by the assumptions). We can apply Bernstein inequality to obtain that 
$$ \mathbb P \left(   - \sum_{i=1}^n ( \bar w^i -1) > t \right)  \leq \exp \left(- \frac{t^2/2 }{   n v + Ut/3} \right) . $$
    Taking $t = n/2$ we obtain the statement.
\end{proof}
We then identify $U := \sup_{\omega \in \Omega} \bar w^i(\omega)$ and $v := \esp{(\bar w^i)^2}$.
Note that, with Assumption~\ref{hyp:q0} there exists a constant $c$ depending on $f,q_0$ such that:
\[
{\ex^{-\alpha f}}<c q_0\,.
\]
By Equation~\eqref{eq1}, there exists a constant $C_4>0$ depending on $f$ such that: 
\[
\sup_{\omega \in \Omega} \bar w^i(\omega) \le cZ_\alpha^{-1}\le  c m_{\alpha}\left(1+ \frac {C_4}\alpha \right) =: U \,.
\]
Moreover, using Equation~\eqref{eq:varbarw}, there exists a constant $C_5>0$ depending on $f,q_0$ such that:
\[
\bE\p{\bar w^1 }^2 \le 2^{-d/2}m_\alpha   \p{q_0(x^*) ^{-1}+ \frac {C_5}\alpha} =: v\,.
\]
Then, Lemma~\ref{lem:Bernstein}, yields the existence of a constant $C_6>0$ depending on $f,q_0$:
\begin{equation}
\label{eq:third-termr-th3}
    \bP(A^c) \le  \exp\p{-2n m_\alpha^{-1}  C_6}\,.
\end{equation}
Putting together Equation~\eqref{eq:first-term-th3},~\eqref{eq:second-term-th3}, and~\eqref{eq:third-termr-th3} in Equation~\eqref{eq:term-to-bound-th3}, we obtain for every $\alpha>\tilde\alpha_1\vee\tilde\alpha_2 =: \tilde\alpha$:
\begin{multline*}
      \mathbb E \norm{\pi_{n,\alpha}(\varphi) - \pi_{\alpha}(\varphi)}^2\leq 8n^{-1} m_\alpha 2^{-d/2}
\left(\frac {C^\mathrm{Lap}_{{\norm{\bar\varphi}^2}/{q_0}}}{2\alpha}\right)(1+\frac { {C_2}}\alpha) \\+ \sqrt{8(n\bE \norm{\varphi(X^1)}^4 + C_3m_\alpha)} \exp\p{-n m_\alpha^{-1}  C_6}\,.
\end{multline*}
Then, there exists a constant $\tilde c>0$ and $C>0$ depending on $q_0,f,\varphi,\alpha_0$ such that for every $(n,\alpha)$ such that $nm_\alpha^{-1} \ge \tilde c \log(n m_\alpha^{-1}\alpha(\sqrt n +\sqrt{m_\alpha}) )$, we obtain for every $\alpha>\tilde \alpha $:
\[
 \mathbb E \norm{\pi_{n,\alpha}(\varphi) - \pi_{\alpha}(\varphi)}^2\leq 9n^{-1} m_\alpha 2^{-d/2}
\left(\frac {C_{f,{\norm{\bar\varphi}^2}/{q_0}}}{2\alpha}\right)(1+\frac { {C}}\alpha)\,.
\]

\section{Proof of Theorem \ref{th:ALISO}}

Without loss of generality, we can assume $f(x^\ast) = 0$. As for the proof of Theorem \ref{th:1}, the proof is in two steps. First we prove the next proposition and only then we consider proving the result of interest.

We introduce the adaptive \emph{unnormalized version} of $\pi_{n,\alpha}^{(A)}(\varphi)$:
\begin{align*}
    \tilde \pi_{n,\alpha}^{(A)}(\varphi) = \frac{1}{n Z_\alpha} \sum_{i \in [n]} w^i \, \varphi(X^i),
\end{align*}
The following is an upper bound on the variance of $ \tilde \pi_{n,\alpha}^{(A)}(\varphi)  $.

\begin{prop}[general bound for $\tilde \pi_{n,\alpha}^{(A)} (\varphi) $]
Suppose that Assumption \ref{hyp:f} and \ref{hyp:Q} are fulfilled. We have for any $\alpha >\tilde \alpha_1$,
\begin{align*}
    &\mathbb E [ \|\tilde \pi_{n,\alpha}^{(A)} (\varphi)- \pi_\alpha (\varphi)  \|^2 ] \leq n ^{-1}  2^{-d/2}m_\alpha   \left( \frac{\norm{\varphi(x^\ast)}^2}{g_0(x^\ast)}+\frac {C_{f,\norm{\varphi}^2/g_0}}{2\alpha}\right)\p{1+\frac {C_1}\alpha}\,,
\end{align*}
where $C_1, \tilde \alpha_1 $ depends only on $g_0,f,\varphi$.
\end{prop}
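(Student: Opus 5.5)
The argument follows the proof of Proposition~\ref{prop:unnormalized}. Independence is replaced by a martingale structure, and the fixed density $q_0$ is replaced by the uniform lower envelope $g_0$. Let $\mathcal F_{i}$ be the $\sigma$-field generated by $X^1,\dots,X^{i}$, so that $q_{i-1}$ is $\mathcal F_{i-1}$-measurable and $X^i\mid\mathcal F_{i-1}\sim q_{i-1}$. Set $\bar w^i := \pi_\alpha(X^i)/q_{i-1}(X^i)$, so that $\tilde \pi^{(A)}_{n,\alpha}(\varphi)=n^{-1}\sum_{i\in[n]}\bar w^i\varphi(X^i)$. Then define the increments
\[
\xi_i := \bar w^i\varphi(X^i)-\pi_\alpha(\varphi).
\]
Conditionally on $\mathcal F_{i-1}$ we have $\bE[\bar w^i\varphi(X^i)\mid\mathcal F_{i-1}] = \int \pi_\alpha(x)\varphi(x)\,\dr x = \pi_\alpha(\varphi)$. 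This uses that $q_{i-1}>0$ everywhere, which holds since $q_{i-1}\ge g_0>0$. Hence $(\xi_i)$ is a martingale difference sequence with respect to $(\mathcal F_i)$.

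For the first step, expand the square:
\[
\bE\norm{\tilde \pi^{(A)}_{n,\alpha}(\varphi)-\pi_\alpha(\varphi)}^2 = n^{-2}\sum_{i,j}\bE\ps{\xi_i,\xi_j}.
\]
For $i<j$, conditioning on $\mathcal F_{j-1}$ kills the cross term, so only the diagonal survives. For each $i$, bound
\[
\bE\norm{\xi_i}^2 \le \bE\bigl[\bE[\norm{\bar w^i\varphi(X^i)}^2\mid\mathcal F_{i-1}]\bigr] = \bE\int \frac{\pi_\alpha(x)^2\norm{\varphi(x)}^2}{q_{i-1}(x)}\,\dr x \le \int \frac{\pi_\alpha(x)^2\norm{\varphi(x)}^2}{g_0(x)}\,\dr x,
\]
using $q_{i-1}\ge g_0$ from Assumption~\ref{hyp:Q}. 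The right-hand side is deterministic and does not depend on $i$. Summing gives the bound $n^{-1}\int \pi_\alpha^2\norm{\varphi}^2/g_0$.

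The second step is the same Laplace computation as in Section~\ref{sec:ProofProp}. Write $\pi_\alpha^2=\pi_{2\alpha}Z_{2\alpha}/Z_\alpha^2$. Lemma~\ref{lemma:laplace_approx} with $\varphi=1$ gives \eqref{eq1}, namely $Z_{2\alpha}/Z_\alpha^2\le 2^{-d/2}m_\alpha(1+8C/\alpha)$ for $\alpha$ large. Next apply Lemma~\ref{lemma:laplace_approx} to the test function $\norm{\varphi}^2/g_0$ at parameter $2\alpha$, as in \eqref{eq2}:
\[
\pi_{2\alpha}\bigl(\norm{\varphi}^2/g_0\bigr) \le \frac{\norm{\varphi(x^\ast)}^2}{g_0(x^\ast)} + \frac{C_{f,\norm{\varphi}^2/g_0}}{2\alpha}.
\]
Multiplying the two bounds and setting $C_1=8C$ yields the claim. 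The threshold $\tilde\alpha_1$ is the maximum of the two thresholds from the lemma.

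The main obstacle is checking that Lemma~\ref{lemma:laplace_approx} legitimately applies to $\norm{\varphi}^2/g_0$, that is, that Assumption~\ref{hyp:varphi} holds for this function. Regularity and positivity of $C_{f,\norm{\varphi}^2/g_0}$ are assumed directly in Assumption~\ref{hyp:Q}. The integrability $\int e^{-f}\norm{\varphi}^2/g_0<\infty$ follows from $e^{-f}\le c_0 g_0$, since then $e^{-f}\norm{\varphi}^2/g_0 \le c_0\norm{\varphi}^2$ near infinity. This needs a little care, because only $\int e^{-f}\norm{\varphi}^2<\infty$ is assumed. I would handle it as in the weakened form mentioned after Assumption~\ref{hyp:varphi}: write $e^{-2f}\norm{\varphi}^2/g_0\le c_0e^{-f}\norm{\varphi}^2$, which is integrable, and apply the lemma with base exponent $2f$, which is natural since we integrate against $\pi_{2\alpha}$. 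A second point is to confirm that $\mathcal F_{i-1}$-measurability of $q_{i-1}$ holds for an arbitrary, unspecified update rule for $\theta_i$. It does by construction, because $q_{i-1}$ is built only from $X^1,\dots,X^{i-1}$.
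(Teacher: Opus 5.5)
Your proposal is correct and is essentially the paper's proof. The increments $\bar w^i\varphi(X^i)-\pi_\alpha(\varphi)$ form a martingale difference sequence, each conditional second moment is bounded by replacing $q_{i-1}$ with the envelope $g_0$, and \eqref{eq1} together with \eqref{eq2} (applied to $\norm{\varphi}^2/g_0$ at level $2\alpha$) closes the argument. Your integrability remark is more careful than the paper, which glosses over it. Note, though, that the bound $e^{-f}\norm{\varphi}^2/g_0\le c_0\norm{\varphi}^2$ does not by itself give integrability (take $\varphi=\mathrm{id}$). What actually justifies \eqref{eq2} is your fallback: the $\alpha_0=2$ weakened form of Lemma~\ref{lemma:laplace_approx}, via $e^{-2f}\norm{\varphi}^2/g_0\le c_0e^{-f}\norm{\varphi}^2$. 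This is also what the paper tacitly relies on.
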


\begin{proof}
Note that
    \[
    \begin{split}
     \tilde \pi_{n,\alpha}^{(A)} (\varphi) - \pi_\alpha (\varphi) =\frac 1 {nZ_\alpha} \sum_{i\in[n]} \xi^i\,,
    \end{split}
    \]
    where $\xi^i :=  w^i \varphi(X^i) - Z_\alpha \pi_\alpha(\varphi) $.     Let $\cF_i$ be the $\sigma$-algebra generated by $(X_k)_{1\le k\le i}$.
    The $(\xi^i)$ are martingale increments satisfying $\espcond{\xi^i}{\cF_{i-1}} = 0$ and
    \[
    \begin{split}
            \espcond{\norm{\xi^i}^2}{\cF_{i-1}} &\le \espcond{\norm{ w^i \varphi(X^i)}^2}{\cF_{k-1}} \\
&\leq Z_{2\alpha} \int \frac{\norm{\varphi (x)} ^2 }{q_{i-1}(x) } \pi_{2\alpha} (dx) \\
&\leq   Z_{2\alpha} \int \frac{\norm{\varphi (x)} ^2 }{    g_{0}(x) } \pi_{2\alpha} (dx)\,. \\
    \end{split}
    \]
Then, using Equation~\eqref{eq1} and \eqref{eq2}, we obtain the existence of constants $C_1,\tilde \alpha_1$ depending on $g_0,f,\varphi$ such that for every $i\in[n]$, it holds for any $\alpha>\tilde \alpha_1$:
\[
 \espcond{\norm{\xi^i}^2}{\cF_{i-1}}\le    Z_{\alpha}^2 2^{-d/2}m_\alpha   \left( \frac{\norm{\varphi(x^\ast)}^2}{g_0(x^\ast)}+\frac {C_{f,\norm{\varphi}^2/g_0}}{2\alpha}\right)\p{1+\frac {C_1}\alpha}\,,
\]
and the result follows.
\end{proof}

\paragraph{End of the proof}  The end of the proof follows in the same way as that of Theorem \ref{th:1} except that instead of the Bernstein we use the Freedman inequality. The changes involved are straightforward. One last difference is in the the treatment of 
\[
\bE \norm{\pi_{n,\alpha}(\varphi) - \pi_{\alpha}(\varphi)}^4 \le 8\bE\norm{\pi_{n,\alpha}(\varphi)}^4 + 8\norm{\pi_{\alpha}(\varphi)}^4\,.
\]
We have
\[
\begin{split}
    \bE\norm{\pi_{n,\alpha}(\varphi)}^4 &\le \bE\p{\frac {\sum_{i\in[n]}w^i\norm{\varphi(X^i)}^4}{\sum_{i\in[n]} w^i}}\\
    &\le \bE \p{\max_{i\in[n]} \norm{\varphi(X^i)}^4}\\
    &\le \sum_{i=1} ^n \int \norm{\varphi(x)}^4 q_{i-1}(x)\dr x\\
    &\le n \sup_{q\in \mathcal Q} \int \norm{\varphi(x)}^4 g_0(x)\dr x\, .
\end{split}
\]
Similarly to the proof of Theorem \ref{th:1},  we then obtain:
\begin{equation*}
   \sqrt{ \bE \norm{\pi_{n,\alpha}(\varphi) - \pi_{\alpha}(\varphi)}^4 }\le \sqrt{8( n \sup_{q\in \mathcal Q} \int \norm{\varphi}^4 q + C_3m_\alpha)}\,.
\end{equation*}

\section{Proof of Lemma \ref{lem:boundede_policy}}
Without loss of generality, we assume $f(x^\ast)=0$.
Then, we write
    \begin{align*}
         {\exp( -  f) }  \leq \exp\left ( -          \psi_{\theta_0}   \right)   \,.
    \end{align*}
    With the definition $q_\theta =\exp(-\psi_\theta) $, the Lipschitz condition yields 
    $$ \frac{\exp( -  f) }{q_\theta}\leq \exp\left (  \psi_\theta  -\psi_{\theta_0} \right)\le\exp(L\norm{\theta-\theta_0}) , $$
and we can conclude.

\section{Additional experiments}
\label{sec:AdditionalPlot}
\begin{figure*}[ht!]
    \centering

    \begin{subfigure}[b]{0.45\textwidth}
        \centering  
        \includegraphics[width=\textwidth]{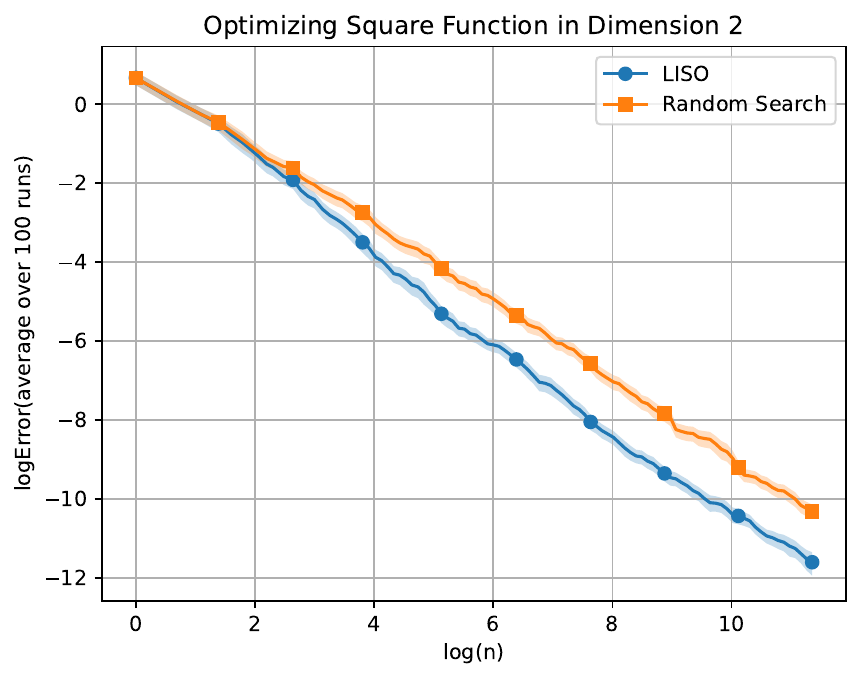}
    \end{subfigure}
    \hfill
    \begin{subfigure}[b]{0.45\textwidth}
        \centering
        \includegraphics[width=\textwidth]{Optimizing_Square_Function_in_Dimension_4.pdf}

    \end{subfigure}

    \vskip\baselineskip

    \begin{subfigure}[b]{0.45\textwidth}
        \centering
        \includegraphics[width=\textwidth]{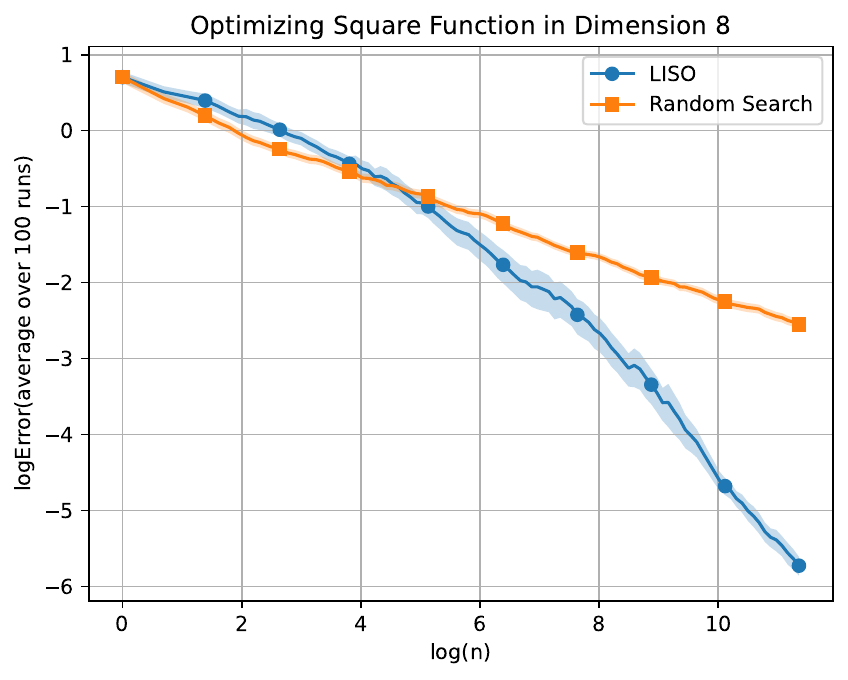}

    \end{subfigure}
    \hfill
    \begin{subfigure}[b]{0.45\textwidth}
        \centering
        \includegraphics[width=\textwidth]{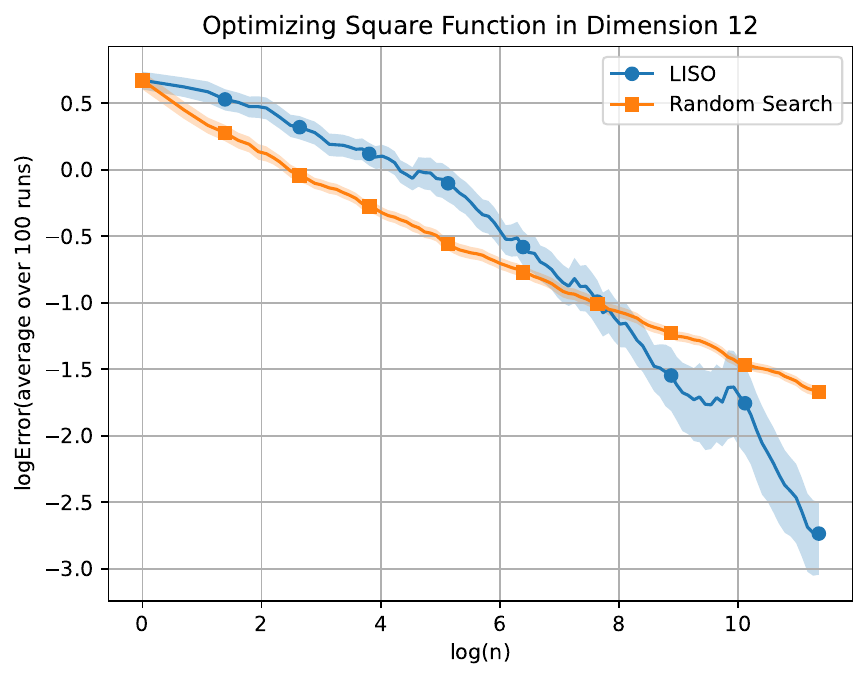}

    \end{subfigure}

    \caption{Function Square ($f_1$) in the static case.}
\end{figure*}

\begin{figure*}[ht!]
    \centering

    \begin{subfigure}[b]{0.45\textwidth}
        \centering  
        \includegraphics[width=\textwidth]{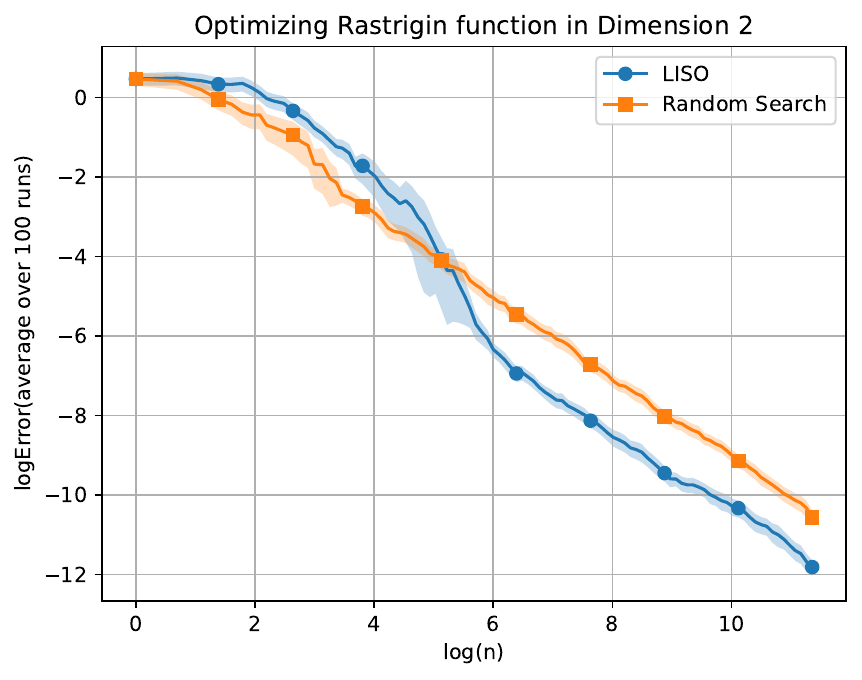}

    \end{subfigure}
    \hfill
    \begin{subfigure}[b]{0.45\textwidth}
        \centering
        \includegraphics[width=\textwidth]{Optimizing_Rastrigin_Function_in_Dimension_4.pdf}
    \end{subfigure}

    \vskip\baselineskip

    \begin{subfigure}[b]{0.45\textwidth}
        \centering
        \includegraphics[width=\textwidth]{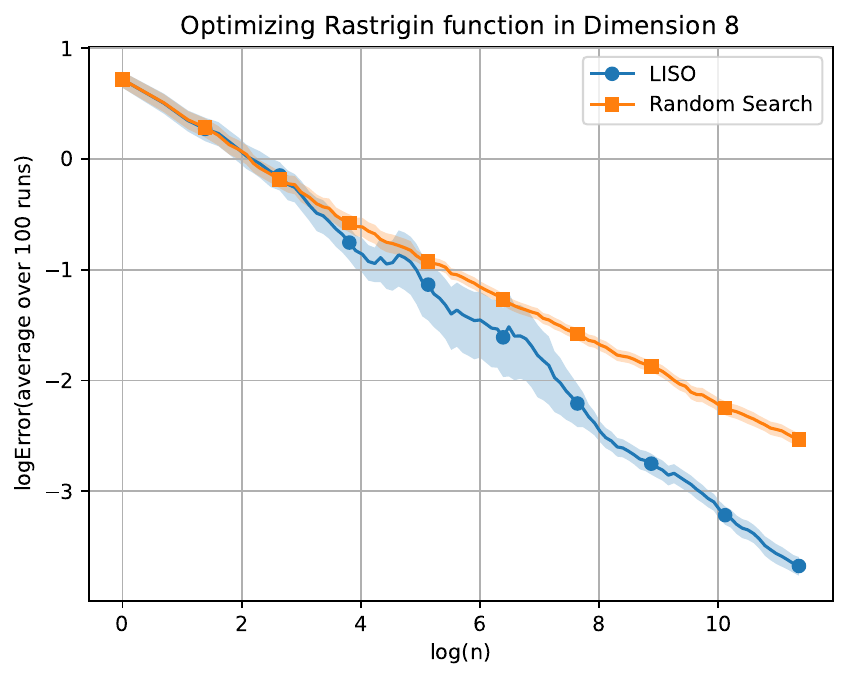}
    \end{subfigure}
    \hfill
    \begin{subfigure}[b]{0.45\textwidth}
        \centering
        \includegraphics[width=\textwidth]{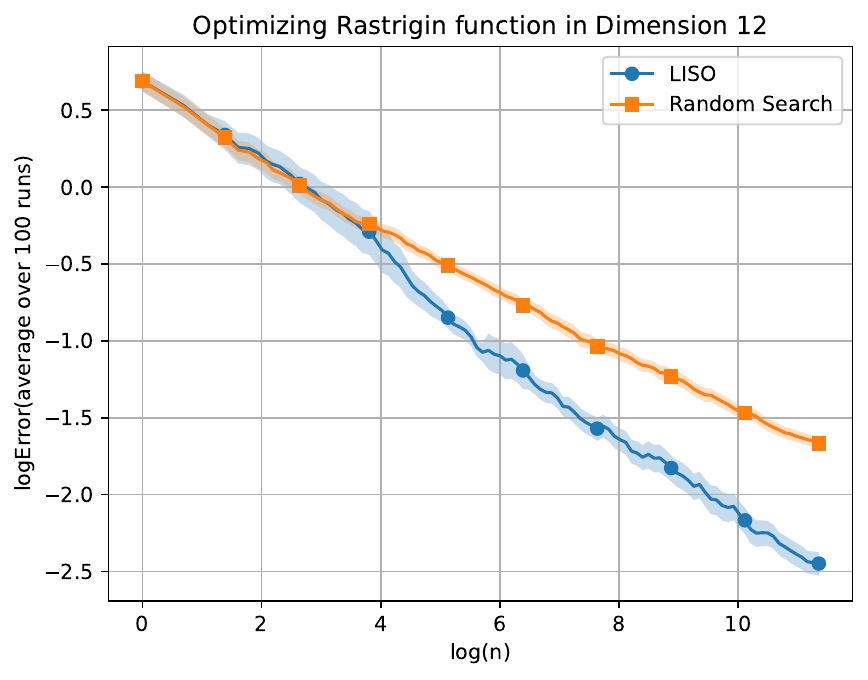}

    \end{subfigure}

    \caption{Function Rastrigin ($f_2$) in the static case. }
\end{figure*}

\begin{figure*}[ht!]
    \centering

    \begin{subfigure}[b]{0.45\textwidth}
        \centering  
        \includegraphics[width=\textwidth]{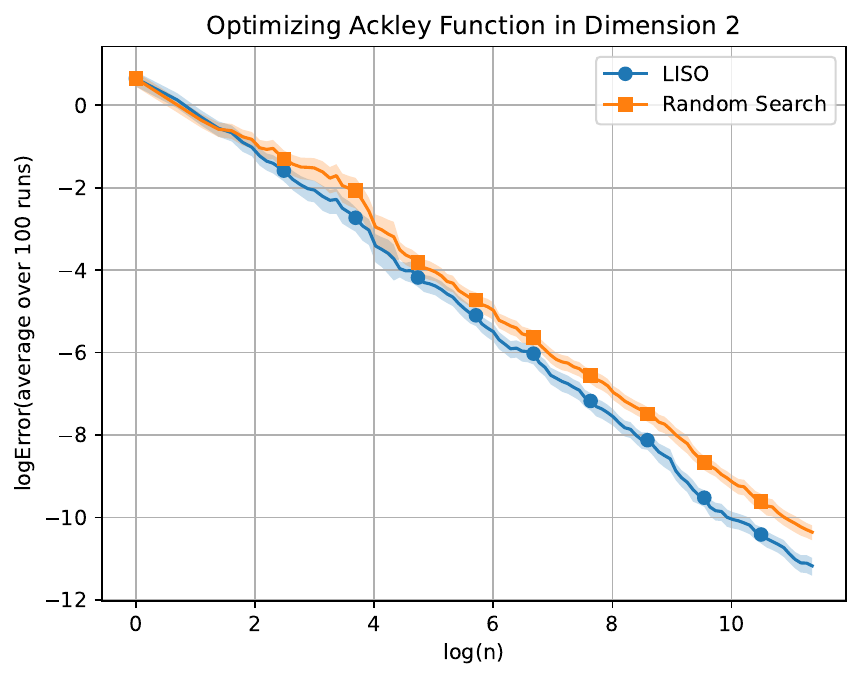}

    \end{subfigure}
    \hfill
    \begin{subfigure}[b]{0.45\textwidth}
        \centering
        \includegraphics[width=\textwidth]{Optimizing_Ackley_Function_in_Dimension_4.pdf}

    \end{subfigure}

    \vskip\baselineskip

    \begin{subfigure}[b]{0.45\textwidth}
        \centering
        \includegraphics[width=\textwidth]{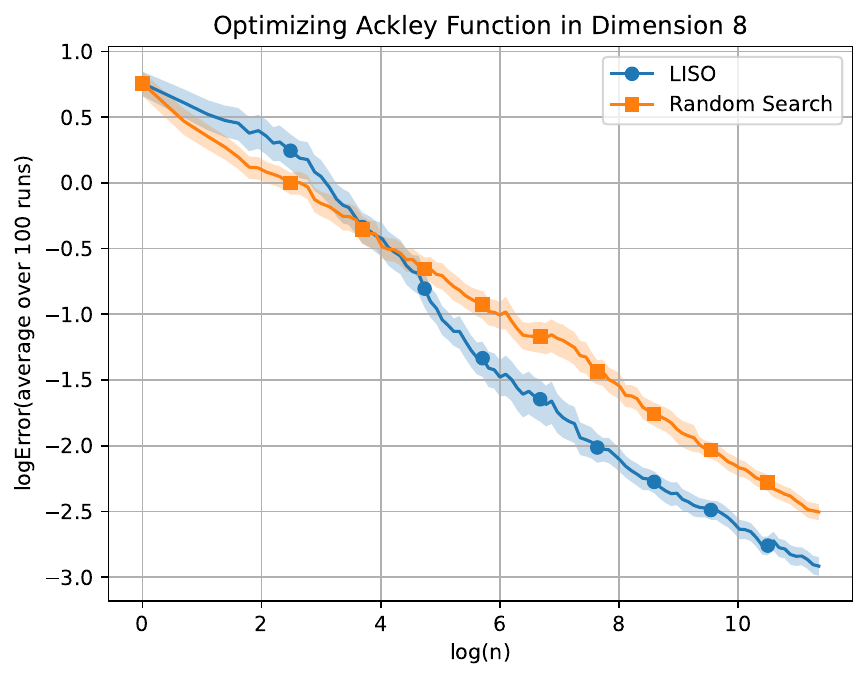}
  
    \end{subfigure}
    \hfill
    \begin{subfigure}[b]{0.45\textwidth}
        \centering
        \includegraphics[width=\textwidth]{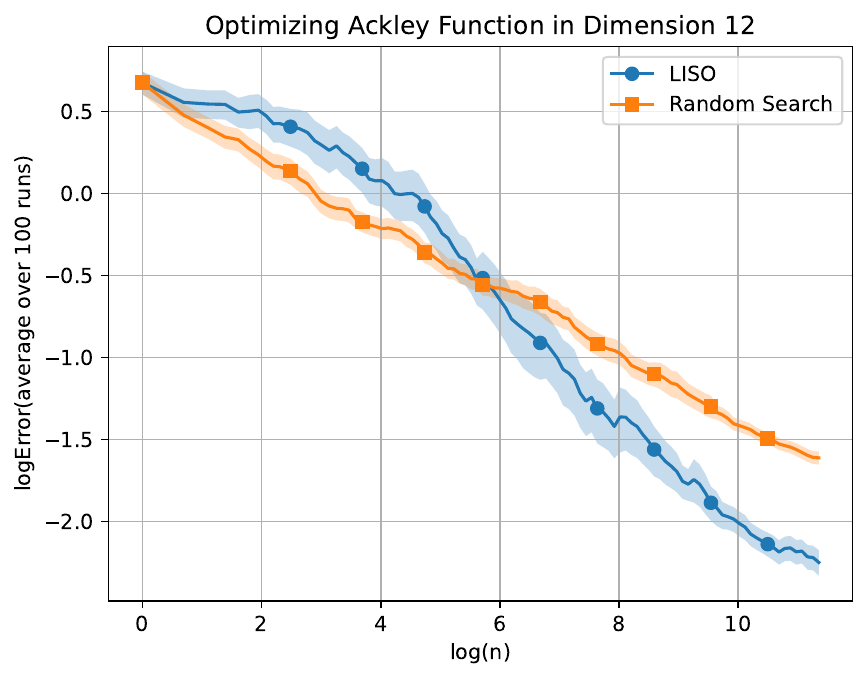}
   
    \end{subfigure}

    \caption{Function Ackley ($f_3$) in the static case.}
\end{figure*}

\begin{figure*}[ht!]
    \centering

    \begin{subfigure}[b]{0.45\textwidth}
        \centering  
        \includegraphics[width=\textwidth]{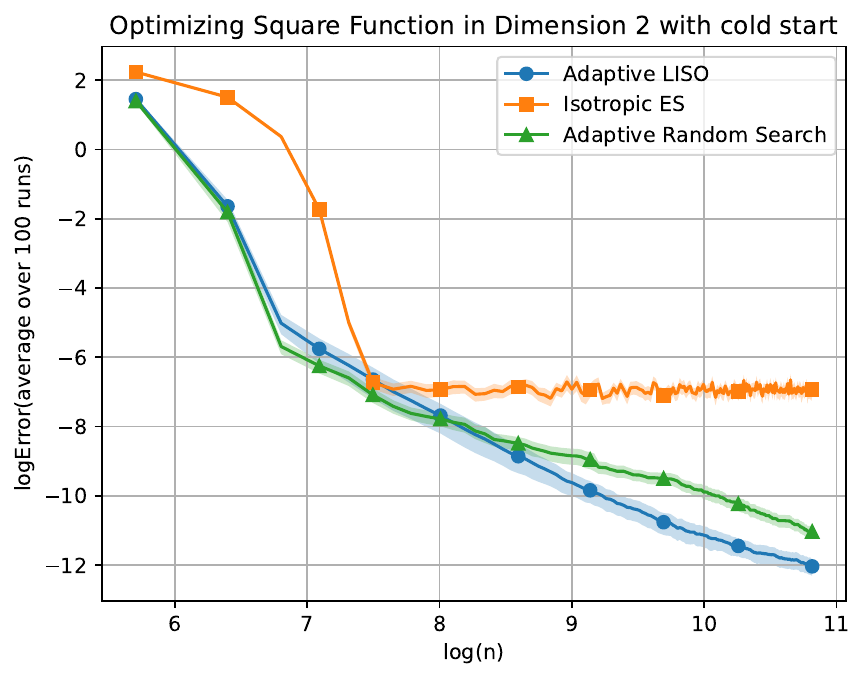}

    \end{subfigure}
    \hfill
    \begin{subfigure}[b]{0.45\textwidth}
        \centering
        \includegraphics[width=\textwidth]{Optimizing_Square_Function_in_Dimension_4_with_cold_start.pdf}

    \end{subfigure}

    \vskip\baselineskip

    \begin{subfigure}[b]{0.45\textwidth}
        \centering
        \includegraphics[width=\textwidth]{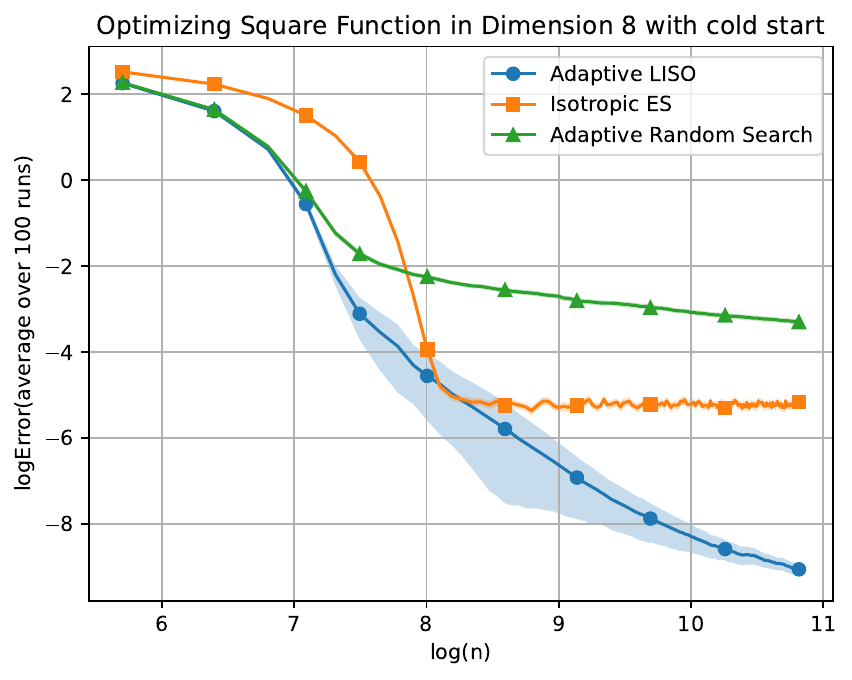}

    \end{subfigure}
    \hfill
    \begin{subfigure}[b]{0.45\textwidth}
        \centering
        \includegraphics[width=\textwidth]{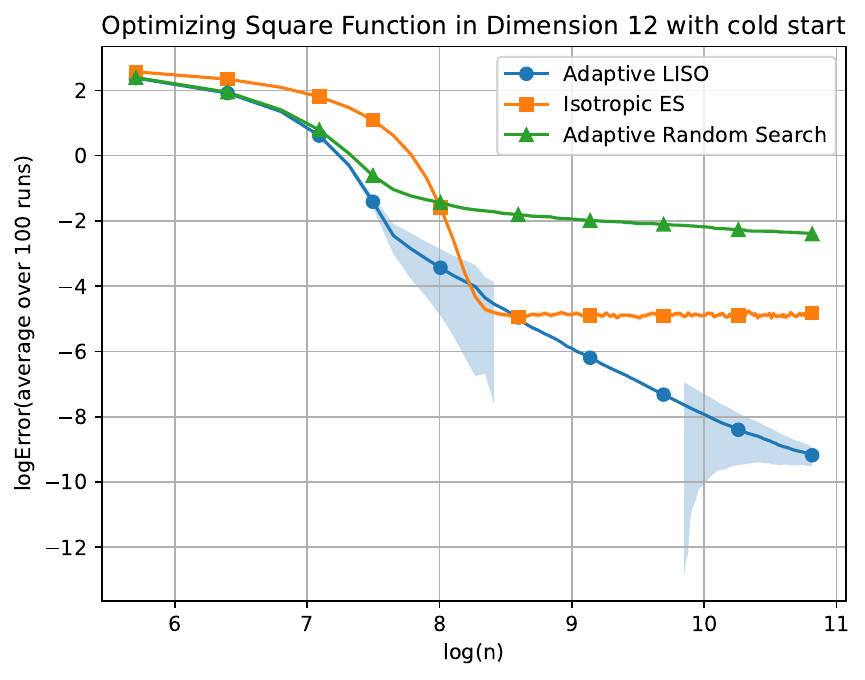}

    \end{subfigure}

    \caption{Function Square ($f_1$) in the adaptive case.}
\end{figure*}

\begin{figure*}[ht!]
    \centering
    \begin{subfigure}[b]{0.45\textwidth}
        \centering  
        \includegraphics[width=\textwidth]{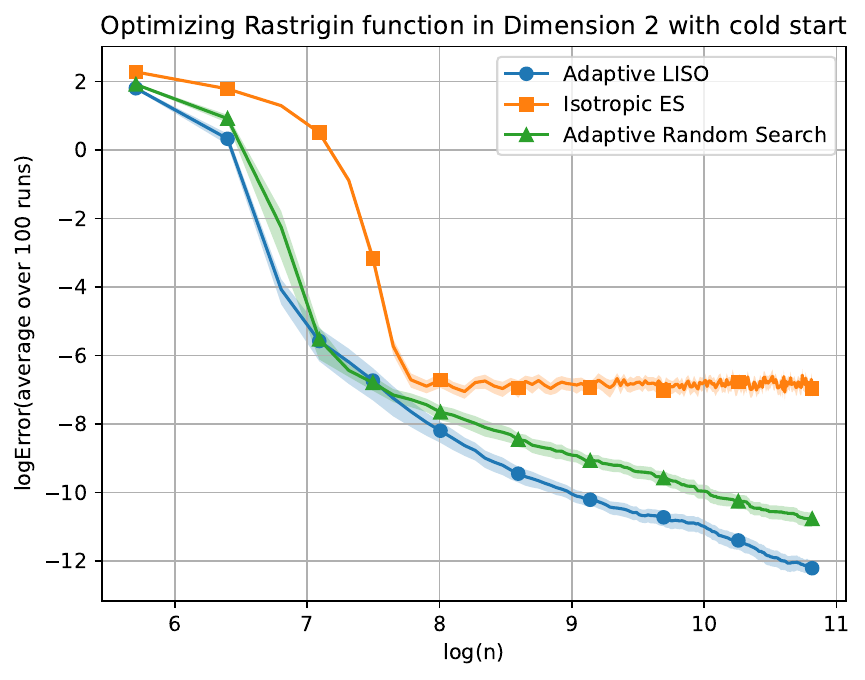}
    \end{subfigure}
    \hfill
    \begin{subfigure}[b]{0.45\textwidth}
        \centering
        \includegraphics[width=\textwidth]{Optimizing_Rastrigin_Function_in_Dimension_4_with_cold_start.pdf}
    \end{subfigure}

    \vskip\baselineskip

    \begin{subfigure}[b]{0.45\textwidth}
        \centering
        \includegraphics[width=\textwidth]{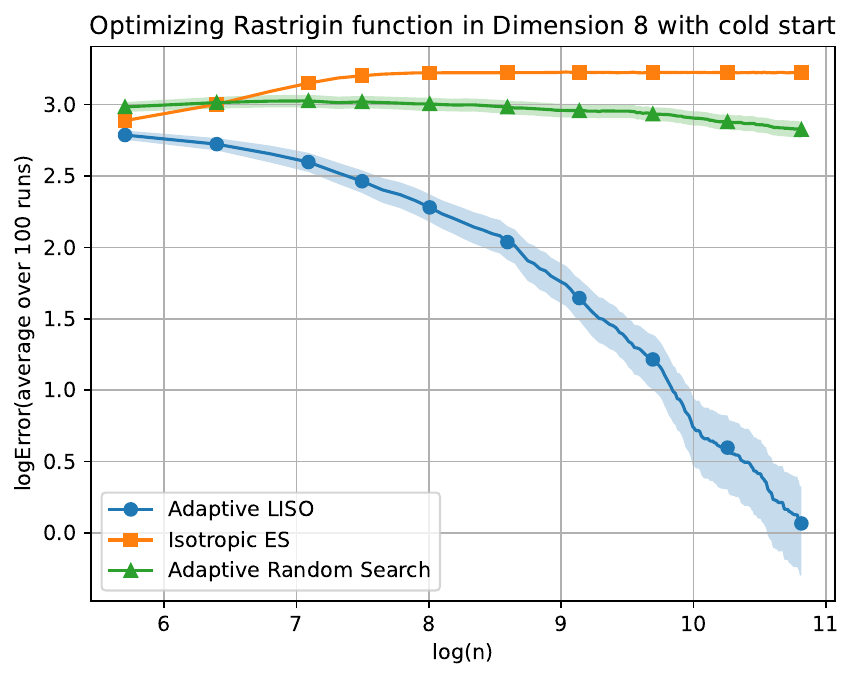}

    \end{subfigure}
    \hfill
    \begin{subfigure}[b]{0.45\textwidth}
        \centering
        \includegraphics[width=\textwidth]{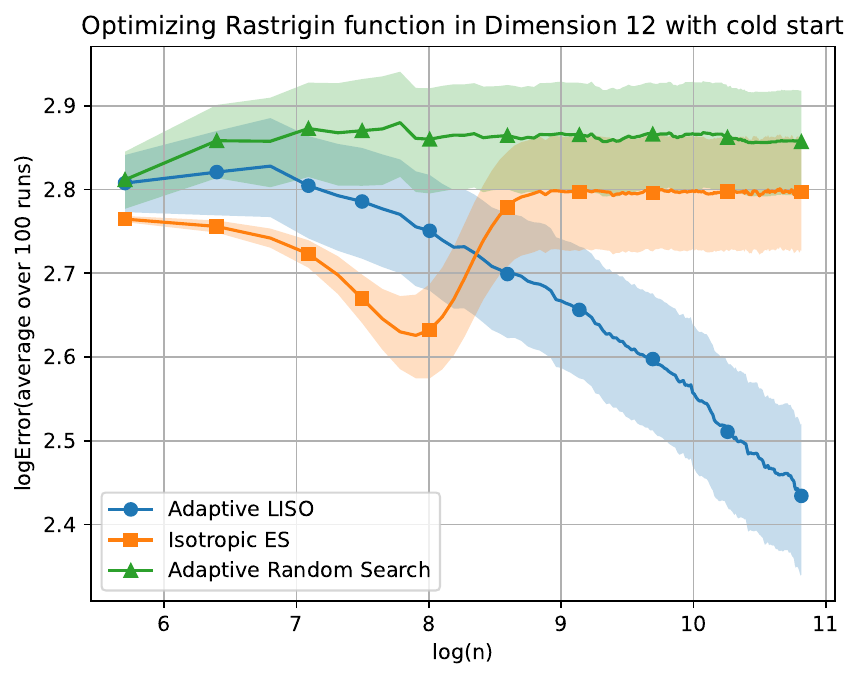}

    \end{subfigure}

    \caption{Function Rastrigin ($f_2$) in the adaptive case.}
\end{figure*}

\begin{figure*}[ht!]
    \centering

    \begin{subfigure}[b]{0.45\textwidth}
        \centering  
        \includegraphics[width=\textwidth]{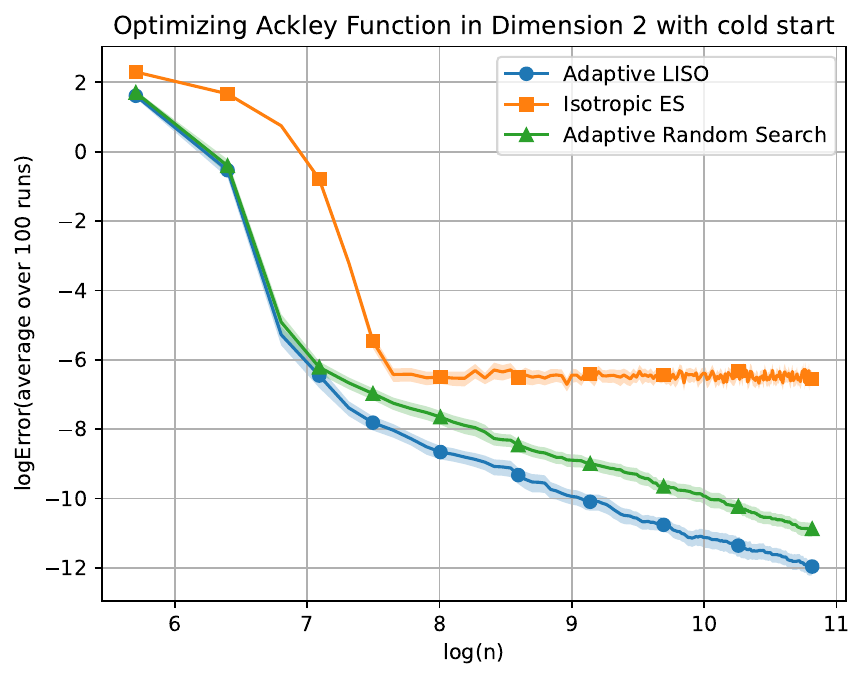}

    \end{subfigure}
    \hfill
    \begin{subfigure}[b]{0.45\textwidth}
        \centering
        \includegraphics[width=\textwidth]{Optimizing_Ackley_Function_in_Dimension_4_with_cold_start.pdf}

    \end{subfigure}

    \vskip\baselineskip

    \begin{subfigure}[b]{0.45\textwidth}
        \centering
        \includegraphics[width=\textwidth]{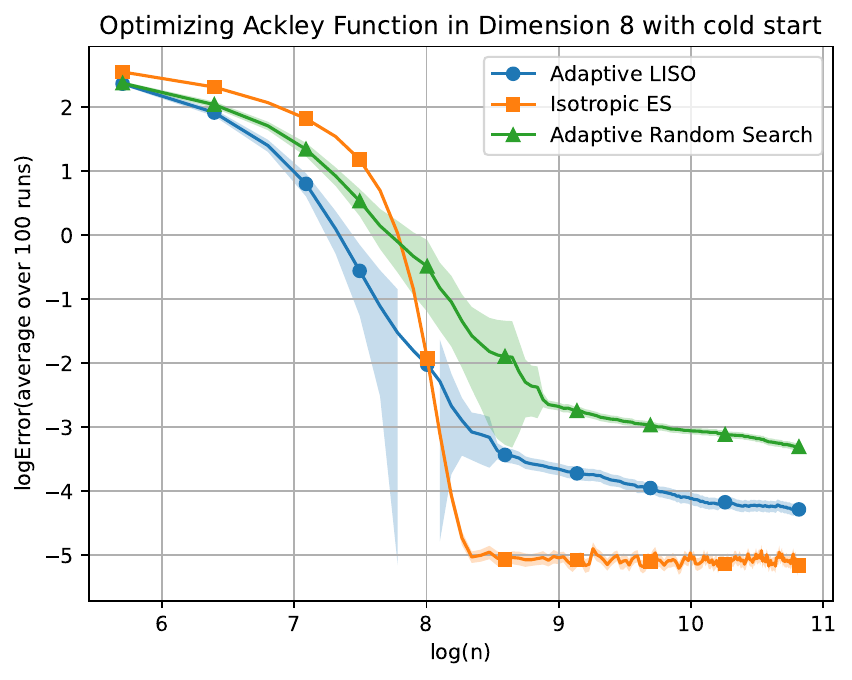}

    \end{subfigure}
    \hfill
    \begin{subfigure}[b]{0.45\textwidth}
        \centering
        \includegraphics[width=\textwidth]{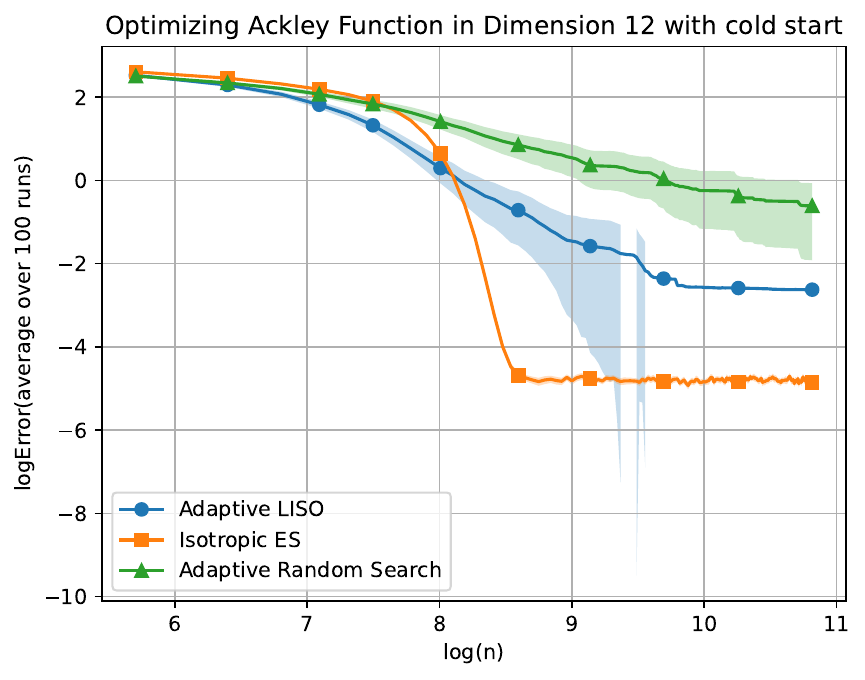}

    \end{subfigure}

    \caption{Function Ackley ($f_3$) in the adaptive case.}
\end{figure*}

\end{document}